\def\mathopsc#1{\mathop{\normalfont\textsc{#1}}\nolimits}
\def\outdeg{\mathopsc{Outdeg}\nolimits}
\def\indeg{\mathopsc{Indeg}\nolimits}
\def\FLOOR#1{\left\lfloor{#1}\right\rfloor}
\def\CEIL#1{\left\lceil{#1}\right\rceil}
\def\lcm{\mathop{\hbox{\rm lcm}}}
\def\head{\mathopsc{Head}}
\def\tail{\mathopsc{Tail}}
\def\In{\mathopsc{In}}
\def\Out{\mathopsc{Out}}
\def\Split{\mathopsc{Split}}
\def\CP{\mathopsc{CP}}
\def\LC{\mathcal{LC}}
\def\Cyc{\mathcal{C}}
\def\Lin{\mathcal{L}}
\def\Multicyc{\mathcal{M}} % Multicycle collections
\def\MCDB{\mathcal{MC}} % Multicyclic de Bruijn sequences
\def\codewords{\mathcal{W}} % Codewords to generate multicyclic mdb
\def\nod{\mathcal{D}} % words not of form {a_1}^d {a_2}^d ...
\def\BWT{\mathopsc{BWT}}
\def\EBWT{\mathopsc{EBWT}}
\def\TB{T_B}
\def\TE{T_E}
\def\TIB{T_{IB}}
\def\TIE{T_{IE}}
\def\numoccur{\mathopsc{\#Occurrences}}
\def\linstr#1{\langle #1 \rangle}
\theoremstyle{plain}
\newtheorem{theorem}{Theorem}[section]
\newtheorem{corollary}[theorem]{Corollary}
\newtheorem{lemma}[theorem]{Lemma}
\theoremstyle{definition}
\newtheorem{example}[theorem]{Example}
\begin{document}

\title{Multi de Bruijn Sequences}
\author[Glenn Tesler]{Glenn Tesler\textsuperscript{*}}
\address{
Department of Mathematics \\
University of California, San Diego \\
La Jolla, CA 92093-0112}

\thanks{\textsuperscript{*}This work was supported in part by National Science Foundation grant CCF-1115206.}

\begin{abstract}
We generalize the notion of a de Bruijn sequence to a ``multi de Bruijn sequence'': a cyclic or linear sequence that contains every $k$-mer over an alphabet of size $q$ exactly $m$ times.
For example, over the binary alphabet $\{0,1\}$,
the cyclic sequence $(00010111)$ and the linear
sequence $000101110$ each contain two instances of each 2-mer $00,01,10,11$.
We derive formulas for the number of such sequences.
The formulas and derivation generalize classical de Bruijn sequences (the case $m=1$).
We also determine the number of multisets of aperiodic cyclic sequences containing every $k$-mer exactly $m$ times; for example,
the pair of cyclic sequences $(00011)(011)$ contains two instances of each $2$-mer listed above. This uses an extension of the Burrows-Wheeler Transform due to Mantaci et al, and generalizes a result by Higgins for the case $m=1$.
\end{abstract}

\maketitle

%%%%%%%%%%%%%%%%%%%%%%%%%%%%%%%%%%%%%%%%%%%%%%%%%%%%%%%%%%%%%%%%%%%%%%%%%%%%%%%

\section{Introduction}

We consider sequences over a totally ordered alphabet $\Omega$ of size $q\ge 1$.
A \emph{linear sequence} is an ordinary sequence $a_1,\ldots,a_n$
of elements of $\Omega$,
denoted in string notation as $a_1\ldots a_n$.
Define the \emph{cyclic shift} of a linear sequence by
$\rho(a_1 a_2 \ldots a_{n-1} a_n) = a_n a_1 a_2 \ldots a_{n-1}$.
In a \emph{cyclic sequence}, we treat all rotations of a
given linear sequence as equivalent:
$$(a_1\ldots a_n) = \{\rho^i(a_1\ldots a_n)=a_{i+1}\ldots a_n a_1\ldots a_{i-1} : i=0,\ldots,n-1 \}.$$
Each rotation $\rho^i(a_1\ldots a_n)$
 is called a \emph{linearization} of the cycle $(a_1\ldots a_n)$.

A \emph{$k$-mer} is a sequence of length $k$ over $\Omega$.
The set of all $k$-mers over $\Omega$ is $\Omega^k$.
A \emph{cyclic de Bruijn sequence}
is a cyclic sequence over alphabet $\Omega$ (of size $q$)
in which all $k$-mers
occur exactly once. The length of such a sequence is $\ell=q^k$, because each of the $q^k$ $k$-mers accounts for one starting position.

In 1894, the problem of counting cyclic de Bruijn sequences over a binary alphabet (the case $q=2$) was proposed by de Rivi\`ere~\cite{deRiviere1894} and solved by Sainte-Marie~\cite{SainteMarie1894}. In 1946, the same problem was solved by de Bruijn~\cite{deBruijn1946}, unaware of Sainte-Marie's work.
In 1951,
the solution was extended to any size alphabet ($q\ge 1$) by
van Aardenne-Ehrenfest and de Bruijn~\cite[p.~203]{vanAE1951}:
\begin{equation}
\text{\# cyclic de Bruijn sequences} = q!^{q^{k-1}}/q^k \;.
\label{eq:num_db}
\end{equation}
The sequences were subsequently named de Bruijn sequences, and work continued on them for decades before the 1894 publication by Sainte-Marie was rediscovered in 1975~\cite{deBruijn1975}.
Table~\ref{tab:notation_comparison} summarizes
the cases considered and notation used in these and other papers.

\begin{table}
\begin{tabular}{clccc}
\multicolumn{2}{l}{Reference} & Multiplicity & Alphabet size & Word size \\
\hline
\cite{deRiviere1894} & de Rivi\`ere (1894) & 1 & 2 & $n$ \\
\cite{SainteMarie1894} & Sainte-Marie (1894) & 1 & 2 & $n$ \\
\cite{deBruijn1946} & de Bruijn (1946) & 1 & 2 & $n$ \\
\cite{vanAE1951} & van A.-E. \& de Bruijn (1951) & 1 & $\sigma$ & $n$ \\
\cite{deBruijn1975} & de Bruijn (1975) & 1 & $\sigma$ & $n$ \\
\cite{DawsonGood1957} & Dawson \& Good (1957) & $k$ & $t$ & $m$ \\
\cite{Fredricksen1982} & Fredricksen (1982) & 1 & $k$  & $n$ \\
\cite{Kandel1996} & Kandel et al (1996) & variable &
	any (uses 4) & $k$ \\
\cite{Stanley_EC2_1999} & Stanley (1999) & 1 & $d$ & $n$ \\
\cite{Higgins2012} & Higgins (2012) & 1 & $k$ & $n$ \\
\cite{Osipov2016} & Osipov (2016) & $f=2$ & $\ell=2$ & $1 \le p \le 4$ \\
 & Tesler (2016) [this paper] & $m$ & $q$ & $k$
\end{tabular}

\smallskip

\caption{Notation or numerical values considered
 for de Bruijn sequences and related problems in the references.}
\label{tab:notation_comparison}
\end{table}

We introduce a \emph{cyclic multi de Bruijn sequence}: a cyclic sequence
over alphabet $\Omega$ (of size $q$)
in which all $k$-mers
 occur exactly $m$ times,
with $m,q,k\ge 1$.
Let $\Cyc(m,q,k)$ denote the set of all such sequences,
 The length of such a sequence is $\ell=mq^k$, because each of the $q^k$ $k$-mers accounts for $m$ starting positions.
For $m=2$, $q=2$, $k=3$,
$\Omega=\{0,1\}$,
one such sequence is
$
(1111011000101000)
$.
The length is
 $\ell=mq^k=2\cdot 2^3=16$.
Each $3$-mer
000, 001, 010, 011, 100, 101, 110, 111
occurs twice,
including overlapping occurrences and occurrences that wrap around the end.

Let $\LC(m,q,k)$ denote the set of linearizations of
cyclic
sequences in $\Cyc(m,q,k)$.
These will be called \emph{linearized} or \emph{linearized cyclic multi de Bruijn sequences}.
Let
$\LC_y(m,q,k)$ denote the set of linearizations that start with
 $k$-mer 
 $y\in\Omega^k$.
In the example above,
the two linearizations starting with $000$
are
$0001111011000101$ and
$0001010001111011$.

For a sequence $s$ and $i\ge 0$, let $s^i$ denote concatenating $i$ copies of $s$.

A cyclic sequence $(s)$ (or a linearization $s$)
of length $n$
has a $d$\textsuperscript{th} order rotation
for some $d|n$ (positive integer divisor $d$ of $n$) iff $\rho^{n/d}(s)=s$
iff $s=t^d$ for some sequence $t$ of length $n/d$.
The \emph{order}
of cycle $(s)$ (or linearization $s$) is
the largest $d|n$
such that
$\rho^{n/d}(s)=s$.
For example,
 $(11001100)$ has order $2$,
while if all $n$ rotations of $s$ are distinct, then $(s)$ has order $1$.
Since a cyclic multi de Bruijn sequence
has exactly $m$ copies of each $k$-mer,
the order
 must divide into $m$.
Sets $\Cyc^{(d)}(m,q,k)$, $\LC^{(d)}(m,q,k)$, and $\LC_y^{(d)}(m,q,k)$
denote multi de Bruijn sequences of order $d$
 that are cyclic, linearized cyclic, and linearized cyclic starting with $y$.

A \emph{linear multi de Bruijn sequence} is a linear sequence
of length $\ell'=mq^k+k-1$
in which all $k$-mers occur exactly $m$ times.
Let $\Lin(m,q,k)$ denote the set of such sequences
and $\Lin_y(m,q,k)$ denote those starting with
 $k$-mer $y$.
A linear sequence does not wrap around and is not the same as a linearization of a cyclic sequence.
For
$m=2$, $q=2$, $k=3$, an example is
111101100010100011,
with length $\ell'=18$.

For $m,q,k\ge 1$,
we will compute the number of
linearized cyclic (Sec.~\ref{sec:num_linearizations}),
cyclic (Sec.~\ref{sec:cyclic_mdb}), and linear (Sec.~\ref{sec:num_linear})
multi de Bruijn sequences.
We also compute the number of
cyclic and linearized cyclic sequences with a
$d$\textsuperscript{th} order rotational symmetry.
We give examples in Sec.~\ref{sec:examples}.
In Sec.~\ref{sec:bruteforce}, we
use brute force
 to generate all
multi de Bruijn sequences for small values of $m,q,k$,
confirming
our formulas in certain cases.
In Sec.~\ref{sec:random_generation}, we show how to select a uniform random linear, linearized cyclic, or cyclic multi de Bruijn sequence.
A summary of our notation and formulas is given in
Table~\ref{tab:our_notation}.

\begin{table}
\begin{tabular}{ll}
Definition			& Notation \\ \hline
Alphabet			& $\Omega$ \\
Alphabet size		& $q=|\Omega|$ \\
Word size			& $k$ \\
Multiplicity of each $k$-mer	& $m$ \\
Rotational order of a sequence	& $d$; must divide into $m$ \\
Specific $k$-mer that sequences start with	& $y$ \\
Complete de Bruijn multigraph	& $G(m,q,k)$ \\
Cyclic shift		& $\rho(a_1\ldots a_n)=a_n a_1\ldots a_{n-1}$ \\
Power of a linear sequence & $s^r=s\cdots s$ ($r$ times) \\
M\"obius function	& $\mu(n)$ \\
Euler's totient function & $\phi(n)$ \\
Set of permutations of $0^m 1^m \cdots (q-1)^m$ & $\codewords_{m,q}$ \\
\end{tabular}

\bigskip

\begin{tabular}{lll}
Multi de Bruijn Sequence		& Set			& Size \\ \hline
 && \\[-11pt]
Linear			& $\Lin(m,q,k)$		& $W(m,q,k)=\left((mq)!/m!^q\right)^{q^{k-1}}$ \\
Cyclic			& $\Cyc(m,q,k)$	& $\frac{1}{mq^k} \sum_{r|m} \phi(m/r) W(r,q,k)$ \\
Linearized cyclic	& $\LC(m,q,k)$		& $W(m,q,k)$ \\
Multicyclic			& $\MCDB(m,q,k)$	& $W(m,q,k)$ \\[10pt]
Linear, starts with $k$-mer $y$	& $\Lin_y(m,q,k)$	& $W(m,q,k)/q^k$ \\
Linearized,
 starts with $y$	& $\LC_y(m,q,k)$	& $W(m,q,k)/q^k$ \\[10pt]
Cyclic, order $d$	& $\Cyc^{(d)}(m,q,k)$	& $\frac{1}{(m/d)q^k} \sum_{r|(m/d)} \mu(r) W(\frac{m}{rd},q,k)$ \\
Linearized,
 starts with $y$, && \\
\strut\qquad order $d$
	& $\LC^{(d)}_y(m,q,k)$		&  $\frac{1}{q^k} \sum_{r|(m/d)} \mu(r) W(\frac{m}{rd},q,k)$ \\[10pt]
\end{tabular}

\smallskip

\caption{Notation and summary of formulas for the number of multi de Bruijn sequences of different types.}
\label{tab:our_notation}
\end{table}

We also consider another generalization:
a \emph{multicyclic de Bruijn sequence} is
a multiset of aperiodic cyclic sequences
such that every $k$-mer occurs exactly $m$ times among all the cycles. 
For example,
 $(00011)(011)$ has two occurrences of each $2$-mer $00,01,10,11$.
This generalizes results of Higgins~\cite{Higgins2012}
for the case $m=1$.
In Sec.~\ref{sec:MCDB_EBWT}, we develop this generalization using the ``Extended Burrows-Wheeler Transform'' of Mantaci et al~\cite{Mantaci2005,Mantaci2007}.
In Sec.~\ref{sec:MCDB_graph_cycles}, we give another method
to count multicyclic de Bruijn sequences by counting the number of ways to partition a balanced graph into
aperiodic cycles with prescribed edge multiplicities.

We implemented these formulas and algorithms in software available at \\
\verb|http://math.ucsd.edu/|$\sim$\verb|gptesler/multidebruijn|\;.

\smallskip

\emph{Related work.}
The methods van Aardenne-Ehrenfest and de Bruijn~\cite{vanAE1951} developed in 1951 to generalize de Bruijn sequences
to alphabets of any size
  potentially could have been adapted to $\Cyc(m,q,k)$; see Sec.~\ref{sec:cyclic_mdb}.
In 1957, Dawson and Good~\cite{DawsonGood1957}
counted
``circular arrays''
in which each $k$-mer occurs $m$ times (in our notation).
Their formula corresponds to an intermediate step in our solution,
but overcounts cyclic multi de Bruijn sequences;
see Sec.~\ref{sec:num_eulerian_cycles}.
Very recently, in 2016,
Osipov~\cite{Osipov2016} introduced the problem of counting ``$p$-ary $f$-fold de Bruijn sequences.''
However, Osipov only gives a partial solution, which appears to be incorrect;
see Sec.~\ref{sec:bruteforce}.

\section{Linearizations of cyclic multi de Bruijn sequences}
\label{sec:num_linearizations}

\subsection{Multi de Bruijn graph}
\label{sec:graph}
We will compute the number of cyclic multi de Bruijn sequences by counting Eulerian cycles in a graph and adjusting for multiple Eulerian cycles corresponding to each cyclic multi de Bruijn sequence.
For now, we assume that $k\ge 2$; we will separately consider $k=1$ in Sec.~\ref{sec:k=1}.
We will also clarify details for the case $q=1$ in Sec.~\ref{sec:q=1}.

Define a multigraph $G=G(m,q,k)$ whose vertices are the $(k-1)$-mers over
alphabet $\Omega$ of size $q$.
There are $n=q^{k-1}$ vertices.
For each $k$-mer $y=c_1c_2\ldots c_k\in\Omega^k$, add $m$ directed edges
$c_1c_2\ldots c_{k-1} \mathop{\to}\limits^y c_2\ldots c_{k-1}c_k$,
each labelled by $y$.
Every vertex has outdegree $mq$ and indegree $mq$.
Further, we will show
in Sec.~\ref{sec:matrices}
 that the graph is strongly connected,
so $G$ is Eulerian.

Consider a walk through the graph:
$$
c_1 c_2 \cdots c_{k-1}
\;\to\;
c_2 c_3 \cdots c_k
\;\to\;
c_3 c_4 \cdots c_{k+1}
\;\to\;
\cdots
\;\to\;
c_r c_{r+1} \cdots c_{r+k-2}
$$
This walk corresponds to a linear sequence
$c_1 c_2 \cdots c_{r+k-2}$,
which can be determined either from vertex labels (when $k\ge 2$) or edge labels (when $k\ge 1$).
If the walk is a cycle (first vertex equals last vertex), then it also
corresponds to
a linearization $c_1 c_2 \cdots c_{r-1}$
of cyclic sequence $(c_1 c_2 \cdots c_{r-1})$.
Starting at another location in the cycle will result in a different linearization of the same cyclic sequence.

Any Eulerian cycle in this graph
gives a cyclic sequence
in $\Cyc(m,q,k)$.
Conversely, each sequence in $\Cyc(m,q,k)$
corresponds to
at least one Eulerian cycle.

Cyclic multi de Bruijn sequences
may also be obtained through
a generalization of Hamiltonian cycles.
Consider cycles in $G(1,q,k+1)$,
with repeated edges and vertices allowed,
in which every vertex ($k$-mer) is visited exactly $m$ times
(not double-counting the initial vertex when it is used to close the cycle).
Each such graph cycle
starting at vertex $y\in\Omega^k$
corresponds to the linearization it spells in
$\LC_y(m,q,k)$, and vice-versa.
However, we will
 use
 the Eulerian cycle approach because it leads to an enumeration formula.

\subsection{Matrices}
\label{sec:matrices}
Let $n=q^{k-1}$ and
form the $n \times n$ adjacency matrix $A$ of directed graph $G=G(m,q,k)$.
When $k\ge 2$,
\begin{equation}
A_{c_1\ldots c_{k-1},d_1\ldots d_{k-1}} =
\begin{cases}
m & \text{if $c_2\ldots c_{k-1}=d_1\ldots d_{k-2}$} \\
0 & \text{otherwise.}
\end{cases}
\label{eq:Avw}
\end{equation}

For every pair of $(k-1)$-mers
$v=c_1\ldots c_{k-1}$ and $w=d_1\ldots d_{k-1}$,
the walks of length $k-1$ from $v$ to $w$ have this form:
$$c_1\ldots c_{k-1}
\;\to\; c_2\ldots c_{k-1} d_1
\;\to\; \cdots
\;\to\; c_{k-1} d_1\ldots d_{k-2}
\;\to\; d_1 \ldots d_{k-1}
$$
For each of the $k-1$ arrows, there are $m$ parallel edges to choose from.
Thus, there are $m^{k-1}$ walks of length $k-1$ from $v$ to $w$,
so the graph is strongly connected and
$(A^{k-1})_{v,w} = m^{k-1}$ for all pairs of vertices $v,w$.
This gives $A^{k-1} = m^{k-1} J$, where $J$ is the $n \times n$ matrix of all 1's.

$J$ has an eigenvalue $n$ of multiplicity $1$
and an eigenvalue $0$ of multiplicity $n-1$.
So $A^{k-1}$ has
an eigenvalue $m^{k-1}n=(mq)^{k-1}$ of multiplicity $1$
and an eigenvalue $0$ of multiplicity $n-1=q^{k-1}-1$.

Thus, $A$ has one eigenvalue of the form $mq\omega$, where $\omega$ is a $(k-1)$\textsuperscript{th} root of unity, and $q^{k-1}-1$ eigenvalues equal to $0$.
In fact, the first eigenvalue of $A$ is $mq$, because the all 1's column vector $\vec 1$ is a right eigenvector with eigenvalue $mq$
(that is, $A\vec 1 = mq \, \vec 1$):
for every vertex $w\in\Omega^{k-1}$, we have
$$
(A\vec 1)_w
= \sum_{v\in\Omega^{k-1}} (A_{vw}) (\vec1)_w
= \sum_{v\in\Omega^{k-1}} A_{vw} \cdot 1
= \indeg(w) = mq \;.
$$
Similarly, the all 1's row vector is a left eigenvector with eigenvalue $mq$.

The degree matrix of an Eulerian graph is an $n\times n$ diagonal matrix with
$D_{vv}=\indeg(v)=\outdeg(v)$ on the diagonal and
$D_{vw}=0$ for $v\ne w$.
All vertices in $G$ have
indegree and outdegree $mq$, so $D=mqI$.
The Laplacian matrix of $G$ is $L = D-A = mqI - A$.
It has one eigenvalue equal to $0$ and $q^{k-1}-1$ eigenvalues equal to $mq$.

\subsection{Number of Eulerian cycles in the graph}
\label{sec:num_eulerian_cycles}
Choose any edge $e=(v,w)$ in $G$. Let $y$ be the $k$-mer represented by $e$.
The number of spanning trees of $G$ with root $v$ (with a directed path from each vertex to root $v$) is given by
Tutte's Matrix-Tree Theorem for directed graphs~\cite[Theorem~3.6]{Tutte1948}
(also see~\cite[Theorem~7]{vanAE1951} and~\cite[Theorem~5.6.4]{Stanley_EC2_1999}).
For a directed Eulerian graph, the formula can be expressed as
follows~\cite[Cor.~5.6.6]{Stanley_EC2_1999}:
the number of spanning trees of $G$ rooted at $v$ is
\begin{equation}
\frac{1}{n} \cdot \left(\text{product of the $n-1$ nonzero eigenvalues of $L$}\right)
= \frac{(mq)^{q^{k-1}-1}}{q^{k-1}} \;.
\label{eq:num_spanning_trees}
\end{equation}

By the BEST Theorem~\cite[Theorem~5b]{vanAE1951} (also see~\cite[pp. 56, 68]{Stanley_EC2_1999} and~\cite{Fredricksen1982}), the number of Eulerian cycles with initial edge $e$ is
\begin{align}
\text{(\# spanning} & \text{~trees rooted at $v$)}
\cdot \prod_{x\in V} (\outdeg(x) - 1)!
\label{eq:BEST_product}
\\
&=
 \frac{1}{q^{k-1}} \cdot (mq)^{q^{k-1}-1}  \cdot  (mq-1)!^{q^{k-1}}
=
 \frac{1}{m \cdot q^k}  \cdot  (mq)!^{q^{k-1}} \;.
\label{eq:num_eulerian_cycles}
\end{align}

Each Eulerian cycle spells out a linearized multi de Bruijn sequence that
starts with $k$-mer $y$.
However, when $m\ge 2$,
there are multiple cycles generating each such sequence.
Let $C$ be an Eulerian cycle spelling out linearization $s$.
For $k$-mer $y$,
the first edge of the cycle, $e$, was given;
the other $m-1$ edges labelled by $y$ are parallel to $e$ and may be 
permuted in $C$ in any of $(m-1)!$ ways.
For the other $q^k-1$ $k$-mers, the $m$ edges representing each $k$-mer can be permuted in $C$ in $m!$ ways.
Thus, each linearization $s$ starting in $y$ is generated by
$(m-1)!\cdot m!^{q^k-1}=(m!^{q^k})/m$ Eulerian cycles that start with edge $e$.
Divide Eq.~(\ref{eq:num_eulerian_cycles}) by this factor
to obtain the number of linearized multi de Bruijn sequences starting with $k$-mer $y$:
\begin{align}
|\LC_y(m,q,k)|
&=
 \frac{1}{m \cdot q^k}  \cdot  \frac{(mq)!^{q^{k-1}}}{m!^{q^k} / m}
=  \frac{1}{q^k}  \cdot \frac{(mq)!^{q^{k-1}}}{m!^{q^k}}
 = \frac{W(m,q,k)}{q^k}
\label{eq:S_y(m,q,k)}
\end{align}
where we set
\begin{align}
W(m,q,k) &=
\frac{(mq)!^{q^{k-1}}}{m!^{q^k}}
= \left(\frac{(mq)!}{m!^q}\right)^{q^{k-1}}
= \binom{mq}{\;\underbrace{m,\ldots,m}_{\text{$q$ of these}}\;}^{q^{k-1}}
\;.
\label{eq:W(m,q,k)}
\end{align}

\emph{Related work.}
Dawson and Good~\cite[p.~955]{DawsonGood1957}
computed the number of ``circular arrays'' containing
each $k$-mer exactly $m$ times (in our notation; theirs is different),
and obtained an answer equivalent 
to~(\ref{eq:num_eulerian_cycles}).
They counted graph cycles in $G(m,q,k)$ starting on a
specific edge, but when $m\ge 2$, this does not equal
$|\Cyc(m,q,k)|$;
 their count includes each cyclic multi de Bruijn sequence multiple times.
This is because they use the convention that the
multiple occurrences of each symbol
 are distinguishable~\cite[p.~947]{DawsonGood1957}.
We do additional steps to count each cyclic multi de Bruijn sequence just once: we adjust for multiple graph cycles corresponding to each linearization~(\ref{eq:S_y(m,q,k)}) and for multiple linearizations of each cyclic sequence (Sec.~\ref{sec:cyclic_mdb}).

\subsection{Special case $k=1$}
\label{sec:k=1}
When $k=1$,
the number of sequences of length $mq^k=mq$ with each symbol in $\Omega$
occurring exactly $m$ times is
$(mq)!/m!^q$.
Divide by $q$ to obtain the number of
linearized de Bruijn sequences 
starting with
each $1$-mer $y\in\Omega$:
$$|\LC_y(m,q,1)|=\frac{1}{q} \cdot \frac{(mq)!}{m!^q} \;.$$
This agrees with plugging $k=1$ into~(\ref{eq:S_y(m,q,k)}),
so~(\ref{eq:S_y(m,q,k)}) holds for $k=1$ as well.

The derivation of~(\ref{eq:S_y(m,q,k)})
in Secs.~\ref{sec:graph}--\ref{sec:num_eulerian_cycles}
requires $k\ge 2$, due to technicalities. The above
derivation uses a different method, but we can also adapt the first derivation as follows.

$G(m,q,1)$ has a single vertex `$\emptyset$' (a $0$-mer) and $mq$ loops 
on $\emptyset$, 
because for each  $i\in\Omega$, there are $m$ loops
$\emptyset\mathop{\to}\limits^i\emptyset$.
The sequence spelling out a walk can be determined from edge labels but not vertex labels, since they're null.

The adjacency matrix is $A=[mq]$ rather than Eq.~(\ref{eq:Avw}).
The Laplacian matrix $L=mqI-A=[0]$ has one eigenvalue: $0$.
In~(\ref{eq:num_spanning_trees}),
the product of nonzero eigenvalues is vacuous;
on plugging in $k=1$, the formula correctly gives that there is $1$ spanning tree (it is the vertex $\emptyset$ without any edges).
The remaining steps work for $k=1$ as-is.

\subsection{Special case $q=1$}
\label{sec:q=1}

For $q=1$,
the only cyclic sequence of length $\ell=mq^k=m\cdot 1^k=m$ is $(0^m)$.
We regard cycle $(0^m)$ and linearization $0^m$
as having an occurrence of $0^k$ starting at each position, even if $k>m$, in which case some positions of $(0^m)$ are used
multiple times
 to form $0^k$.
With this convention, there are
exactly $m$ occurrences of $0^k$
in
$(0^m)$.
Thus, for $q=1$ and $m,k\ge 1$,
\begin{align*}
\LC(m,1,k)&=\{0^m\},
& \LC_{0^k}(m,1,k)&=\{0^m\}, \\
\Cyc(m,1,k)&=\{(0^m)\}, 
& \Lin(m,1,k)&=\{0^{m+k-1}\} .
\end{align*}
Using positions of a cycle multiple times to form a $k$-mer will also arise in multicyclic de Bruijn sequences in Sec.~\ref{sec:MCDB_EBWT}.

\section{Cyclic multi de Bruijn sequences}
\label{sec:cyclic_mdb}

In a classical de Bruijn sequence ($m=1$),
each $k$-mer occurs exactly once, so
the number of
cyclic de Bruijn sequences equals
the number of linearizations that begin with any fixed $k$-mer $y$:
$$|\Cyc(1,q,k)|=|\LC_y(1,q,k)| = (q!)^{q^{k-1}} / q^k \;.$$
This agrees with~(\ref{eq:num_db}).
But a multi de Bruijn sequence
has $m$ cyclic shifts starting with $y$,
and the number of these that are distinct varies by sequence.
Let $d|m$ be the order of the cyclic symmetry of the sequence;
then there are $m/d$
distinct linearizations starting with $y$, so
\begin{equation}
|\LC^{(d)}_y(m,q,k)|=\frac{m}{d} \cdot  |\Cyc^{(d)}(m,q,k)|
\;.
\end{equation}
Further, we have the following:
\begin{lemma} For $m,q,k\ge 1$ and $d|m$:

(a)~$|\LC^{(d)}(m,q,k)|=|\LC^{(1)}(m/d,q,k)|$

(b)~For each $k$-mer $y\in\Omega^k$,
$|\LC^{(d)}_y(m,q,k)|=|\LC^{(1)}_y(m/d,q,k)|$.

(c)~$|\Cyc^{(d)}(m,q,k)|=|\Cyc^{(1)}(m/d,q,k)|$
\label{lem:m/d}
\end{lemma}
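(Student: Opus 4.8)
The plan is to prove all three parts at once via a single bijection: the map sending a linearization $s$ of order $d$ to its \emph{primitive root} $t$, the unique sequence with $s=t^d$ and $t$ of order $1$, which has length $L=(m/d)q^k$. The heart of the argument is to show this map restricts to a bijection $\LC^{(d)}(m,q,k)\to\LC^{(1)}(m/d,q,k)$, after which parts (b) and (c) follow by tracking the extra structure (starting $k$-mer, cyclic equivalence).

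First I would check that the primitive root of an order-$d$ multi de Bruijn sequence is itself multi de Bruijn with multiplicity $m/d$. Since $s=t^d$ we have $s_i=t_{i\bmod L}$ for all $i$, so for any start $i\in\{0,\ldots,mq^k-1\}$ the cyclic $k$-mer $s_i\cdots s_{i+k-1}$ read in $(s)$ equals the cyclic $k$-mer $t_{i\bmod L}\cdots t_{(i+k-1)\bmod L}$ read in $(t)$, because reducing indices mod $L$ commutes with the wraparound in both cycles. Thus reduction mod $L$ carries occurrences in $(s)$ to occurrences in $(t)$, and it is exactly $d$-to-$1$ on $\{0,\ldots,mq^k-1\}$ (each fiber having $d$ elements, as $mq^k=dL$). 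Hence if a $k$-mer occurs $m$ times in $(s)$ it occurs $m/d$ times in $(t)$, so $t\in\LC(m/d,q,k)$. Its order is forced to be $1$: if $t=u^e$ with $e>1$ then $s=u^{ed}$ would give $(s)$ a rotational symmetry of order $ed>d$, contradicting that $d$ is the order of $s$. Conversely, starting from a primitive $t\in\LC^{(1)}(m/d,q,k)$ and forming $s=t^d$ reverses every step: the same $d$-to-$1$ count shows each $k$-mer occurs $m$ times in $(s)$, and primitivity of $t$ makes the order of $s$ exactly $d$. This gives part (a).

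For part (b) I would observe that the bijection preserves the starting $k$-mer. When $q\ge 2$ we have $L=(m/d)q^k\ge 2^k>k$, so the first $k$ symbols of $s=t^d$ coincide with the first $k$ symbols of $t$; thus $s$ starts with $y$ iff $t$ does, and the bijection restricts to $\LC^{(d)}_y(m,q,k)\to\LC^{(1)}_y(m/d,q,k)$. The case $q=1$ is handled directly from the explicit one-element descriptions of these sets in Sec.~\ref{sec:q=1}. For part (c) I would promote the map to cyclic sequences by sending the cycle $(s)$ of order $d$ to the cycle $(t)$ with $s=t^d$. The key identity is $\rho^{j}(t^d)=(\rho^{\,j\bmod L}(t))^d$, which follows from the period-$L$ structure of $t^d$ (both sides agree position by position, each equalling $t_{(p-j)\bmod L}$ at position $p$). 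This shows simultaneously that a different linearization of $(s)$ produces a rotation of $t$, so $(t)$ is well defined, and that the inverse assignment $(t)\mapsto(t^d)$ respects cyclic equivalence. The two maps are mutually inverse, yielding part (c).

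The step I expect to be the main obstacle is the occurrence count: one must argue carefully that reduction mod $L$ is a genuine $d$-to-$1$ correspondence between $k$-mer occurrences of $(s)$ and $(t)$ \emph{including wraparound}, rather than merely matching the non-wrapping ones, and one must confirm the degenerate regime where $k$ can exceed the cycle length. In particular the $q=1$ case, where the convention of Sec.~\ref{sec:q=1} allows a position to be reused in spelling a $k$-mer, must be kept consistent with the counting; everything else reduces to bookkeeping about divisors and the definition of order.
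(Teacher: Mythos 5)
Your proposal is correct and takes essentially the same approach as the paper's proof: decompose an order-$d$ linearization as $s=t^d$ with $t$ primitive, note that $k$-mer occurrence counts scale by the factor $d$ (which you verify explicitly via the $d$-to-$1$ reduction of starting positions modulo $L$, where the paper simply asserts it), use $|t|\ge k$ for $q\ge 2$ to see the initial $k$-mer is preserved in (b), and induce a well-defined bijection on cycles in (c) from the identity $\rho^{j}(t^d)=(\rho^{\,j\bmod L}(t))^d$, which is the paper's remark that rotating $s$ rotates $t$ correspondingly. The differences are only in the level of detail of the index bookkeeping and in how the degenerate $q=1$ case is dispatched.
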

\begin{proof}
At $q=1$, all sets in (a)--(c) have size $1$.
Below, we consider $q\ge 2$.

(a)~Let $s\in\LC^{(d)}(m,q,k)$.
Since $s$ has order $d$, it splits into $d$ equal parts, $s=t^d$.
 The $m$ occurrences of each $k$-mer in $(s)$ reduce to $m/d$ occurrences of each $k$-mer in $(t)$,
and the order of $t$
is
 $d/d=1$,
so $t\in\LC^{(1)}(m/d,q,k)$.

Conversely, if $t\in\LC^{(1)}(m/d,q,k)$ then $(t^d)$ has $d \cdot (m/d)=m$ occurrences of each $k$-mer and has order $d$, so $(t^d)\in\LC^{(d)}(m,q,k)$.

(b)~Continuing with (a),
the length of $t$ is at least $k$
(since $|t|=q^k\ge k$ for $q\ge 2$ and $k\ge 1$), so the initial $k$-mer of
$s$ and $t$ must be the same.

(c)~The map in (a) induces a bijection
$f:\Cyc^{(d)}(m,q,k)\to\Cyc^{(1)}(m/d,q,k)$ as follows.
Let $\sigma\in\Cyc^{(d)}(m,q,k)$.
For any linearization $s$ of $\sigma$, let $s=t^d$
and set $f(\sigma)=(t)$.
This is well-defined:
the distinct linearizations of $\sigma$ are
$\rho^i(s)$ where $i=0,\ldots,(mq^k/d)-1$.
Rotating $s$ by $i$ also rotates $t$ by $i$ and gives
the same
cycle in $\Cyc^{(1)}(m/d,q,k)$, so all linearizations of $\sigma$ give
the same result.

Conversely, given $(t)\in\Cyc^{(1)}(m/d,q,k)$, the unique inverse
is $f^{-1}((t))=(t^d)$. As above, this is well-defined.
\end{proof}

Partitioning the cyclic multi de Bruijn sequences by order gives
the following, where the sums run over positive integers $d$ that divide into $m$:
\begin{equation}
|\Cyc(m,q,k)| = \sum_{d|m} |\Cyc^{(d)}(m,q,k)| = \sum_{d|m} |\Cyc^{(1)}(m/d,q,k)| \;,
\label{eq:Bcirc1}
\end{equation}

Each cyclic sequence of order $d$ has $m/d$ distinct linearizations starting with $y$, so
\begin{align*}
|\LC_y(m,q,k)|
&= \sum_{d|m} |\LC^{(d)}_y(m,q,k)|
= \sum_{d|m} \frac{m}{d} \cdot |\Cyc^{(d)}(m,q,k)|
\\
&= \sum_{d|m} \frac{m}{d} \cdot |\Cyc^{(1)}(m/d,q,k)|
= \sum_{d|m} d \cdot |\Cyc^{(1)}(d,q,k)| \;.
\end{align*}
By M\"obius Inversion,
$$
m\cdot |\Cyc^{(1)}(m,q,k)| = \sum_{d|m} \mu(d) \cdot |\LC_y(m/d, q, k)|
\;.
$$
Solve this for $|\Cyc^{(1)}(m,q,k)|$:
\begin{align}
|\Cyc^{(1)}(m,q,k)| &= \frac{1}{m}\sum_{d|m} \mu(d) \cdot |\LC_y(m/d, q, k)|
 \;.
\label{eq:R_1b}
\end{align}
Combining this with~(\ref{eq:S_y(m,q,k)}) and Lemma~\ref{lem:m/d}(c) gives
\begin{align}
|\Cyc^{(d)}(m,q,k)|=
\frac{1}{(m/d)q^k} \sum_{r|(m/d)} \mu(r) W(m/(rd),q,k) \;.
\end{align}
Plug~(\ref{eq:R_1b}) into~(\ref{eq:Bcirc1}) to obtain
\begin{align*}
|\Cyc(m,q,k)|
&= \sum_{d|m} |\Cyc^{(1)}(m/d,q,k)|
\\
&= \sum_{d|m} \frac{1}{m/d} \sum_{d'|(m/d)}  \mu(d') \cdot |\LC_y(m/(dd'), q, k)| \;.
\\
\noalign{\noindent Change variables: set $r=dd'$}
&= \sum_{r|m} \sum_{d|r} \frac{1}{m/d} \cdot \mu(r/d) \cdot |\LC_y(m/r, q, k)|
\\
&= \frac{1}{m} \sum_{r|m}
 \left(\sum_{d|r} d \cdot \mu(r/d)\right)
 \cdot
 |\LC_y(m/r, q, k)|
\\
& = \frac{1}{m} \sum_{r|m} \phi(r) \cdot |\LC_y(m/r, q, k)|
= \frac{1}{m} \sum_{r|m} \phi(m/r) \cdot |\LC_y(r, q, k)|
 \;,
\end{align*}
where $\phi(r)$ is the Euler totient function.
Thus,
\begin{equation}
|\Cyc(m,q,k)| =
\frac{1}{mq^k} \sum_{r|m}
\phi(m/r) \cdot 
W(r,q,k) \;.
\label{eq:Bcirc(m,q,k)}
\end{equation}

\emph{Related work.}
Van Aardenne-Ehrenfest and de Bruijn~\cite[Sec. 5]{vanAE1951} take
a directed Eulerian multigraph $G$ and replace each edge by a ``bundle'' of $\lambda$ parallel edges to obtain a multigraph $G^\lambda$. They compute the number of Eulerian cycles in $G^\lambda$ in terms of the number of Eulerian cycles in $G$,
and obtain a formula related to~(\ref{eq:Bcirc(m,q,k)}).
Their bundle method could have been applied to count cyclic multi de Bruijn
sequences in terms of ordinary cyclic de Bruijn sequences
by taking $G=G(1,q,k)$, $\lambda=m$, and considering $G^m$,
but they did not do this.
Instead, they set $\lambda=q$ and
found a correspondence between
Eulerian cycles in
$G(1,q,k+1)$ vs.\  $G^q$ with $G=G(1,q,k)$,
yielding a recursion in $k$ for $|\Cyc(1,q,k)|$.
They derived~(\ref{eq:num_db}) by this recursion rather than
the BEST Theorem, which was also introduced in that paper.
Dawson and Good~\cite[p.~955]{DawsonGood1957} subsequently derived~(\ref{eq:num_db}) by the BEST Theorem.

\section{Linear multi de Bruijn sequences}
\label{sec:num_linear}
Now we consider 
$\Lin(m,q,k)$, the set of
\emph{linear sequences} in which every $k$-mer over $\Omega$ occurs exactly $m$ times.
A linear sequence is not the same as a linearized representation of a circular sequence.
Recall that a linearized sequence has length $\ell=mq^k$.
A linear sequence has length $\ell'=mq^k + k-1$, because
there are $mq^k$ positions at which $k$-mers start, and an additional $k-1$ positions at the end to complete the final $k$-mers. Below, assume $q\ge 2$.

\begin{lemma}
For $q\ge 2$, for each sequence in $\Lin(m,q,k)$,
the first and last $(k-1)$-mer are the same.
\end{lemma}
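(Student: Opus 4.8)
The plan is to double-count, for each $(k-1)$-mer $u$, the occurrences of $u$ as a length-$(k-1)$ factor of the linear sequence, comparing how $u$ extends to the right and to the left. Fix $s=a_1\cdots a_{\ell'}\in\Lin(m,q,k)$ with $\ell'=mq^k+k-1$; the case $k=1$ is trivial since the $(k-1)$-mers are empty, so I assume $k\ge 2$. A $(k-1)$-mer can start at any of the positions $1,\ldots,mq^k+1$, and the first and last $(k-1)$-mers of $s$ are exactly the factors starting at positions $1$ and $mq^k+1$; the goal is to show these two factors are equal.

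First I would set $N_u$ equal to the number of positions $i\in\{1,\ldots,mq^k+1\}$ at which $u$ occurs, and count $N_u$ in two ways. Every occurrence of $u$ at a position $i\le mq^k$ extends on the right to the unique $k$-mer $a_i\cdots a_{i+k-1}$ having prefix $u$, and conversely each occurrence of a $k$-mer with prefix $u$ arises this way; since the $q$ $k$-mers of the form $uc$ each occur $m$ times, there are $mq$ positions $i\in\{1,\ldots,mq^k\}$ carrying $u$. Symmetrically, every occurrence of $u$ at a position $i\ge 2$ extends on the left to the unique $k$-mer with suffix $u$, and counting the $q$ $k$-mers of the form $cu$ shows there are $mq$ positions $i\in\{2,\ldots,mq^k+1\}$ carrying $u$.

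Writing $L_u,R_u\in\{0,1\}$ for the indicators that $u$ occurs at position $1$ and at position $mq^k+1$ respectively, the two counts become $N_u-R_u=mq$ and $N_u-L_u=mq$, so $L_u=R_u$ for every $(k-1)$-mer $u$. Exactly one $u$ has $L_u=1$, namely the first $(k-1)$-mer of $s$, and exactly one has $R_u=1$, namely the last; since these indicators agree, the first and last $(k-1)$-mer coincide. I expect the only real obstacle to be the boundary bookkeeping — correctly matching positions $1$ and $mq^k+1$ to the prefix and suffix counts and tracking the off-by-one between $k$-mer starts ($1,\ldots,mq^k$) and $(k-1)$-mer starts ($1,\ldots,mq^k+1$). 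As an alternative I could phrase the same fact in $G(m,q,k)$: the sequence $s$ traces a walk using each of the $mq^k$ edges exactly once (an Eulerian trail), and because every vertex is balanced with $\indeg=\outdeg=mq$ (Sec.~\ref{sec:matrices}), the trail must be closed, so its start vertex (the first $(k-1)$-mer) equals its end vertex (the last).
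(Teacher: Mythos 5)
Your proof is correct and is essentially the paper's own argument: the paper likewise double-counts occurrences of each $(k-1)$-mer, once by extending right to the $q$ $k$-mers having it as a prefix and once by extending left to the $q$ $k$-mers having it as a suffix, concluding that the unique $(k-1)$-mer with the extra occurrence at position $1$ must equal the unique one with the extra occurrence at position $mq^k+1$; your indicator bookkeeping $N_u-R_u=mq=N_u-L_u$ is just a tidier packaging of that same count. Your closing sketch (the sequence traces an Eulerian trail in the balanced graph $G(m,q,k)$, which must therefore be closed) is a valid alternative route, though not the one the paper uses.
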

\begin{proof}
Count the number of times each $(k-1)$-mer $x\in\Omega^{k-1}$ occurs in
$s'=a_1 a_2 \ldots a_{\ell'}\in\Lin(m,q,k)$
as follows:

Each $k$-mer occurs $m$ times in $s'$ among starting positions $1,\ldots,mq^k$. Since
$x$
 is
 the first $k-1$ letters of $q$ different $k$-mers, there are $mq$ occurrences of $x$ starting in this range.
There is also a $(k-1)$-mer $x_1$ with an additional occurrence at starting position $mq^k+1$ (the last $k-1$ positions of $s'$), so $x_1$ occurs $mq+1$ times in $s'$ while all other $(k-1)$-mers occur $mq$ times.

By a similar argument, the $(k-1)$-mer $x_2$
at the start of $s'$
has a total of $mq+1$ occurrences in $s'$, and all other $(k-1)$-mers
occur exactly $mq$ times.

So $x_1$ and $x_2$ each occur $mq+1$ times in $s'$.
But we showed that there is only one $(k-1)$-mer occurring $mq+1$ times, so $x_1=x_2$.
\end{proof}

For $q\ge 2$, this leads to
a bijection
$f:\Lin(m,q,k)\to\LC(m,q,k)$.
Given $s'\in\Lin(m,q,k)$, drop the last $k-1$ characters to obtain
sequence $s=f(s')$.
To invert, given $s\in\LC(m,q,k)$, form $s'=f^{-1}(s)$ by repeating
the first $k-1$ characters of $s$ at the end.

For $i\in\{1,\ldots,\ell\}$,
the same $k$-mer occurs linearly at position $i$ of $s'$ and
circularly at position $i$ of $s$,
so every $k$-mer occurs the same number of times in
both linear sequence $s'$
and
cycle $(s)$.
Thus, $f$ maps $\Lin(m,q,k)$ to $\LC(m,q,k)$, and $f^{-1}$ does the reverse.

This also gives $|\Lin_y(m,q,k)|=|\LC_y(m,q,k)|$.
Multiply~(\ref{eq:S_y(m,q,k)}) by $q^k$ choices of initial $k$-mer $y$ to obtain
\begin{equation}
|\Lin(m,q,k)|
=|\LC(m,q,k)|
= q^k \cdot |\LC_y(m,q,k)|
=
W(m,q,k)
\;.
\label{eq:Blin(m,q,k)}
\end{equation}

Above, we assumed $q\ge 2$.
Eq.~(\ref{eq:Blin(m,q,k)})
also holds for $q=1$, since all four parts equal $1$
(see Sec.~\ref{sec:q=1}).

\section{Examples}
\label{sec:examples}

\begin{table}
(A)~$\LC_{00}(m=2,q=2,k=2)$: Linearizations starting with 00

\smallskip

\begin{tabular}{lllll}
Order 1: & 00010111 & 00011011 & 00011101 & 00100111 \\
              & 00101110 & 00110110 & 00111010 & 00111001 \\
Order 2: & 00110011
\end{tabular}

\bigskip

(B)~$\Cyc(m=2,q=2,k=2)$: Cyclic multi de Bruijn sequences

\smallskip

\begin{tabular}{lllll}
Order 1: & (00010111) & (00011011) & (00011101) & (00100111)
\\
Order 2: & (00110011)
\end{tabular}

\bigskip

(C)~$\Lin(m=2,q=2,k=2)$: Linear multi de Bruijn sequences

\smallskip

\begin{tabular}{llll}
 000101110 &  010001110 &  100010111 &  110001011 \\
 000110110 &  010011100 &  100011011 &  110001101 \\
 000111010 &  010111000 &  100011101 &  110010011 \\
 001001110 &  011000110 &  100100111 &  110011001 \\
 001011100 &  011001100 &  100110011 &  110100011 \\
 001100110 &  011011000 &  100111001 &  110110001 \\
 001101100 &  011100010 &  101000111 &  111000101 \\
 001110010 &  011100100 &  101100011 &  111001001 \\
 001110100 &  011101000 &  101110001 &  111010001
\end{tabular}

\bigskip

(D)~$\MCDB(m=2,q=2,k=2)$: Multicyclic de Bruijn sequences

\smallskip

\begin{tabular}{llll}
(0)(0)(01)(01)(1)(1)  &	(0)(001011)(1)    & (0001)(01)(1)(1)   & (00011)(01)(1)   \\
(0)(0)(01)(011)(1)    & (0)(0010111)      & (0001)(011)(1)     & (000111)(01)     \\
(0)(0)(01)(0111)      & (0)(0011)(011)    & (0001)(0111)       & (00011011)       \\
(0)(0)(01011)(1)      & (0)(00101)(1)(1)  & (0001011)(1)       & (001)(001)(1)(1) \\
(0)(0)(010111)        & (0)(001101)(1)    & (00010111)         & (001)(0011)(1)   \\
(0)(0)(011)(011)      & (0)(0011101)      & (00011)(011)       & (001)(00111)     \\
(0)(001)(01)(1)(1)    & (0)(0011)(01)(1)  & (000101)(1)(1)     & (0010011)(1)     \\
(0)(001)(011)(1)      & (0)(00111)(01)    & (0001101)(1)       & (00100111)       \\
(0)(001)(0111)        & (0)(0011011)      & (00011101)         & (0011)(0011)
\end{tabular}

\bigskip

\caption{Multi de Bruijn sequences of each type, with $m=2$ copies of each $k$-mer, 
alphabet $\Omega=\{0,1\}$ of size $q=2$,
and word size $k=2$.
}
\label{tab:(m,q,k)=(2,2,2)}
\end{table}

The case $m=1$, arbitrary $q,k\ge1$, is the classical de Bruijn sequence.
Eq.~(\ref{eq:Bcirc(m,q,k)}) has just one term, $r=1$, and agrees with~(\ref{eq:num_db}):
$$
|\Cyc(1,q,k)|
= \frac{1}{q^k} \cdot \phi(1) \cdot \binom{q}{1,\ldots,1}^{q^{k-1}}
= \frac{1}{q^k} \cdot 1 \cdot q!^{q^{k-1}}
= \frac{q!^{q^{k-1}}}{q^k} \;.
$$

When $m=p$ is prime, the sum in~(\ref{eq:Bcirc(m,q,k)})
for the number of cyclic multi de Bruijn sequences 
has two terms (divisors $r=1,p$):
\begin{align}
|\Cyc(p,q,k)|
&= \frac{1}{pq^k}
\Bigl(
\phi(1) \cdot
W(p,q,k)
+
\phi(p) \cdot
W(1,q,k)
\Bigr)
\nonumber
\\
&= \frac{1}{pq^k}
\left(
\left(\frac{(pq)!}{p!^q}\right)
^{q^{k-1}}
+
(p-1) \cdot (q!)^{q^{k-1}}
\right) .
\label{eq:Bcirc_m_prime}
\end{align}

For $(m,q,k)=(2,2,2)$,
Table~\ref{tab:(m,q,k)=(2,2,2)} lists all
multi de Bruijn sequences of each type.
Table~\ref{tab:(m,q,k)=(2,2,2)}(A)
shows $9$ linearizations beginning with $y=00$.
By~(\ref{eq:S_y(m,q,k)}),
$$
|\LC_{00}(2,2,2)|
= \frac{1}{2^2} \binom{2\cdot 2}{2,2}^{2^{2-1}}
= \frac{1}{4} \binom{4}{2,2}^{2}
= \frac{1}{4} \cdot 6^2 = 9 \;.
$$
Some of these are equivalent upon cyclic rotations, yielding 5 distinct cyclic multi de Bruijn sequences,
shown in Table~\ref{tab:(m,q,k)=(2,2,2)}(B).
By~(\ref{eq:Bcirc(m,q,k)}) or~(\ref{eq:Bcirc_m_prime}),
\begin{align*}
|\Cyc(2,2,2)|
&= \frac{1}{2\cdot 2^2} \sum_{r|2} \phi(2/r) \binom{2r}{r,r}^{2^1}
= \frac{1}{8}
\left(
 \phi(1)\binom{2\cdot 2}{2,2}^{2}
 +
 \phi(2) \binom{2\cdot 1}{1,1}^{2}
 \right)
 \\
&= \frac{1}{8}
\left(
 1\cdot\binom{4}{2,2}^{2}
 +
 1\cdot\binom{2}{1,1}^{2}
 \right)
= \frac{1}{8}
\left(
 6^2
 +
 2^2
 \right)
= \frac{40}{8} = 5 \;.
\end{align*}

There are 9 linearizations starting with each $2$-mer 00, 01, 10, 11. Converting these to linear multi de Bruijn sequences yields $36$ linear multi de Bruijn sequences in total, shown in Table~\ref{tab:(m,q,k)=(2,2,2)}(C). By~(\ref{eq:Blin(m,q,k)}),
$$
|\Lin(2,2,2)|=\binom{2\cdot 2}{2,2}^{2^1}=\binom{4}{2,2}^2=6^2=36 .
$$
Table~\ref{tab:(m,q,k)=(2,2,2)}(D) shows the 36 multicyclic de Bruijn sequences;
 see Sec.~\ref{sec:MCDB_EBWT}.

\section{Generating multi de Bruijn sequences by brute force}
\label{sec:bruteforce}

Osipov~\cite{Osipov2016} recently
defined a ``$p$-ary $f$-fold de Bruijn sequence''; this is the same as a cyclic multi de Bruijn sequence but in different terminology and notation.
The description below is in terms of our notation $m,q,k$,
which respectively correspond to Osipov's $f,\ell,p$
(see Table~\ref{tab:our_notation}).

Osipov developed a
method~\cite[Prop.~3]{Osipov2016}
to compute the number of cyclic multi de Bruijn sequences for 
multiplicity $m=2$, alphabet size $q=2$,
and word size $k\ge 1$.
This method requires performing a detailed construction for each $k$.
The paper states the two sequences for $k=1$, which agrees with
our $\Cyc(2,2,1)=\{(0011),(0101)\}$ of size 2,
and then carries out this construction to evaluate cases $k=2,3,4$.
It determines complete answers for those specific cases only~\cite[pp.~158--161]{Osipov2016}.
For $k=2$, Osipov's answer agrees with ours, $|\Cyc(2,2,2)|=5$.
But for $k=3$ and $k=4$ respectively, Osipov's method gives 72 and 43768,
whereas
by~(\ref{eq:Bcirc(m,q,k)}) or (\ref{eq:Bcirc_m_prime}), we obtain
$|\Cyc(2,2,3)|=82$ and $|\Cyc(2,2,4)|=52496$.

For these parameters, it is feasible to find all cyclic multi de Bruijn sequences
by a brute force search in
$\Omega^\ell$ (with $\ell=mq^k$).
This agrees with our
formula~(\ref{eq:Bcirc(m,q,k)}),
while disagreeing with Osipov's results for $k=3$ and $4$.
Table~\ref{tab:(m,q,k)=(2,2,3)} lists the 82 sequences
in $\Cyc(2,2,3)$.
The 52496 sequences for $\Cyc(2,2,4)$
and an implementation of the algorithm below
 are on the website listed in the introduction.
We also used brute force to confirm~(\ref{eq:Bcirc(m,q,k)}) for all combinations of
parameters $m,q,k\ge 2$ with at most a 32-bit search space
($q^\ell \le 2^{32}$):
\begin{align*}
|\Cyc(2,2,2)| &= 5 		& |\Cyc(6,2,2)| &= 35594 	& |\Cyc(2,2,3)| &= 82 \\
|\Cyc(3,2,2)| &= 34		& |\Cyc(7,2,2)| &= 420666	& |\Cyc(3,2,3)| &= 6668 \\
|\Cyc(4,2,2)| &= 309		& |\Cyc(8,2,2)| &= 5176309	& |\Cyc(4,2,3)| &= 750354 \\
|\Cyc(5,2,2)| &= 3176	& |\Cyc(2,3,2)| &= 40512	& |\Cyc(2,2,4)| &= 52496.
\end{align*}

Represent $\Omega^\ell$ by $\ell$-digit base $q$ integers (range
$N\in[0,q^\ell-1]$):
$$
N=\sum_{j=0}^{\ell-1} a_j q^j
\quad
(0\le a_j < q)
\quad
\text{represents~}
s=a_{\ell-1} a_{\ell-2} \ldots a_0 \;.
$$

Since we represent each element of $\Cyc(m,q,k)$ by a linearization 
that starts with $k$-mer $0^k$, 
we restrict to $N\in[0,q^{\ell-k+1}-1]$.
To generate $\Cyc(m,q,k)$:
\begin{itemize}
\item[(B1)]
for $N$ from $0$ to $q^{\ell-k+1}-1$ (pruning as described later):
\item[(B2)] \strut\qquad let $s=a_{\ell-1} a_{\ell-2} \ldots a_0$ be the $\ell$-digit base $q$ expansion of $N$
\item[(B3)] \strut\qquad if (cycle $(s)$ is a cyclic multi de Bruijn sequence
\item[(B4)] \strut\qquad \phantom{if~(}and $s$ is
smallest
 among its $\ell$ cyclic shifts)
\item[(B5)] \strut\qquad
then output cycle $(s)$
\end{itemize}

To generate $\LC_{0^k}(m,q,k)$,
skip (B4) and output
linearizations 
$s$ in (B5).

To determine if $(s)$ is a cyclic multi de Bruijn sequence, examine its $k$-mers
$a_{i+k-1} a_{i+k-2} \ldots a_i$ (subscripts taken modulo $\ell$)
in the order $i=\ell-k, \ell-k-1, \ldots, 1-k$,
counting how many times each $k$-mer is encountered.
Terminate the test early (and potentially prune as described below) if any $k$-mer count exceeds $m$.
If the test does not terminate early, then all $k$-mer counts must equal $m$, and it is a multi de Bruijn sequence.

If $i>0$ when $k$-mer counting terminates,
then 
some $k$-mer count exceeded $m$ without examining
$a_{i-1}\ldots a_0$.
Advance $N$ to $\left(\FLOOR{N/q^i} + 1\right)q^i$ instead of $N+1$,
skipping all remaining sequences
starting
 with the same
$\ell-i$ characters.
This is equivalent to incrementing the $\ell-i$ digit prefix
$a_{\ell-1}\ldots a_i$ as a base $q$ number and then concatenating $i$ $0$'s to the end.
However, if $i\le 0$,
 then all characters have been examined, so advance $N$ to $N+1$.

We may do additional pruning
by counting $k'$-mers for $k'\in\{1,\ldots,k-1\}$.
Each $k'$-mer $x$ is a prefix of $q^{k-k'}$ $k$-mers and thus should occur $mq^{k-k'}$ times in $(s)$.
If a $k'$-mer has more than $mq^{k-k'}$ occurrences in $a_{\ell-1}\cdots a_i$ with $i>0$, advance $N$ as described above.

We may also restrict the search space to length $\ell$ sequences with
exactly 
$mq^{k-1}$ occurrences of each of the $q$ characters.
We did not need to implement this for the parameters considered.

\begin{table}
\begin{tabular}{l}
(0000100101101111), (0000100101110111), (0000100101111011), \\
(0000100110101111), (0000100110111101), (0000100111010111), \\
(0000100111011101), (0000100111101011), (0000100111101101), \\
(0000101001101111), (0000101001110111), (0000101001111011), \\
(0000101011001111), (0000101011100111), (0000101011110011), \\
(0000101100101111), (0000101100111101), (0000101101001111), \\
(0000101101111001), (0000101110010111), (0000101110011101), \\
(0000101110100111), (0000101110111001), (0000101111001011), \\
(0000101111001101), (0000101111010011), (0000101111011001), \\
(0000110010101111), (0000110010111101), (0000110011110101), \\
(0000110100101111), (0000110100111101), (0000110101001111), \\
(0000110101111001), (0000110111100101), (0000110111101001), \\
(0000111001010111), (0000111001011101), (0000111001110101), \\
(0000111010010111), (0000111010011101), (0000111010100111), \\
(0000111010111001), (0000111011100101), (0000111011101001), \\
(0000111100101011), (0000111100101101), (0000111100110101), \\
(0000111101001011), (0000111101001101), (0000111101010011), \\
(0000111101011001), (0000111101100101), (0000111101101001), \\
(0001000101101111), (0001000101110111), (0001000101111011), \\
(0001000110101111), (0001000110111101), (0001000111010111), \\
(0001000111011101), (0001000111101011), (0001000111101101), \\
(0001010001101111), (0001010001110111), (0001010001111011), \\
(0001010110001111), (0001010111000111), (0001010111100011), \\
(0001011000101111), (0001011000111101), (0001011010001111), \\
(0001011100010111), (0001011100011101), (0001011101000111), \\
(0001011110001101), (0001011110100011), (0001100011110101), \\
(0001101000111101), (0001101010001111), (0001110001110101), \\
(0001110100011101)
\end{tabular}
\smallskip
\caption{
$\Cyc(2,2,3)$:
For multiplicity $m=2$, alphabet size $q=2$, and word size $k=3$, there are 82 cyclic multi de Bruijn sequences.
We show the lexicographically least rotation of each.
}
\label{tab:(m,q,k)=(2,2,3)}
\end{table}

\section{Generating a uniform random multi de Bruijn sequence}
\label{sec:random_generation}

We present algorithms to select a uniform random multi de Bruijn sequence in the linear, linearized, and cyclic cases.

Kandel et al~\cite{Kandel1996}
present two algorithms to
generate random sequences with a specified number of occurrences of each $k$-mer:
(i)~a ``shuffling'' algorithm that permutes the characters of a sequence in a manner that preserves the number of times each $k$-mer occurs, and (ii)~a method to choose
a uniform random Eulerian cycle in an Eulerian graph
and spell out a cyclic sequence from the cycle.
Both methods may be applied to select linear or linearized multi de Bruijn sequences uniformly. However, additional steps are required to choose a uniform random cyclic de Bruijn sequence; see Sec.~\ref{sec:random_cyclic}.

\begin{example}
For $(m,q,k)=(2,2,2)$,
Table~\ref{tab:(m,q,k)=(2,2,2)}(A) lists the nine linearizations
starting with 00.
The random Eulerian cycle algorithm selects each
 with probability $1/9$.
Table~\ref{tab:(m,q,k)=(2,2,2)}(B) lists
the five cyclic de Bruijn sequences.
The four cyclic sequences of order 1
are each selected with probability $2/9$ since they have two linearizations
starting with $00$,
while
(00110011) has one such linearization and thus is selected with probability $1/9$.
Thus, the cyclic sequences are not chosen uniformly
(which is probability $1/5$ for each).
\end{example}

Kandel et al's 
shuffling algorithm performs a large number of random swaps of intervals in a sequence and
approximates the probabilities listed above.
Below, we focus on the random Eulerian cycle algorithm.

\subsection{Random linear(ized) multi de Bruijn sequences}
\label{sec:random_linear}
Kandel et al~\cite{Kandel1996} pre\-sent a general algorithm for finding random cycles in a directed Eulerian graph, and apply it to the de Bruijn graph of a string: the vertices are the $(k-1)$-mers of the string and the edges are the $k$-mers repeated with their multiplicities in the string.
As a special case, the de Bruijn graph of any cyclic sequence in $\Cyc(m,q,k)$ is the same as our $G(m,q,k)$.
We explain their algorithm and how it specializes to $G(m,q,k)$.

Let $G$ be a directed Eulerian multigraph
and $e=(v,w)$ be an edge in $G$.
 The proof of the BEST Theorem~\cite[Theorem~5b]{vanAE1951}
gives a bijection between
\begin{enumerate}
\item[(a)] Directed Eulerian cycles of $G$ whose first edge is $e$, and
\item[(b)] ordered pairs $(T,f)$ consisting of
 \begin{itemize}
 \item
 a spanning tree $T$ of $G$ rooted at $v$, and
 \item
  a function $f$ assigning each vertex $x$ of $G$ a 
permutation of its outgoing edges
except $e$ (for $x=v$) or the unique outgoing edge of $x$ in $T$ (for $x\ne v$).
 \end{itemize}
\end{enumerate}
In Eq.~(\ref{eq:BEST_product}), the left side counts (a) while the right side counts (b). Note that for each $T$, there are $\prod_{x\in V}(\outdeg(x)-1)!$ choices of $f$.

We will use the BEST Theorem map from (b) to (a).
Given $(T,f)$ and $e=(v,w)$ as above,
construct an Eulerian cycle $C$ with successive vertices $v_0,v_1,\ldots,v_\ell$
and successive edges $e_1,e_2,\ldots,e_\ell$ as follows:
\begin{itemize}
\item
Set $e_1=e$, $v_0=v$, $v_1=w$, and $i=1$.
\item
While $v_i$ has at least one outgoing edge not yet used in $e_1,\ldots,e_i$:
	\begin{itemize}
	\item
	Let $j$ be the number of times vertex $v_i$ appears among $v_0,\ldots,v_i$.
	\item
	If $j<\outdeg(v_i)$, set $e_{i+1}$ to the $j$\textsuperscript{th} edge
	 given by $f(v_i)$. \\
	If $j=\outdeg(v_i)$, set $e_{i+1}$ to the outgoing edge of $v_i$ in $T$.
	\item Let $v_{i+1}=\head(e_{i+1})$.
	\item Increment $i$.
	\end{itemize}
\end{itemize}

This reduces
the problem of selecting a uniform random Eulerian cycle
to selecting $T$ uniformly and $f$ uniformly. Kandel et al~\cite{Kandel1996} give the following algorithm to select a uniform random spanning tree $T$ 
with root $v$ in
$G$.

Initialize $T=\{v\}$.
Form a random walk in $G$ starting at $v_0=v$ and going backwards on edges (using incoming edges instead of outgoing edges):
$$
v_0 \xleftarrow{\;\,e_0}
v_1 \xleftarrow{\;\,e_1}
v_2 \xleftarrow{\;\,e_2}
v_3
\;\cdots
$$
At each vertex $v_i$, choose edge $e_i$ uniformly from its incoming edges,
and set $v_{i+1}=\tail(e_i)$.
(Kandel et al gives the option of ignoring loops in selecting an incoming edge, but we allow them.)
If this is the first time vertex $v_{i+1}$ has been encountered on the walk,
add vertex $v_{i+1}$ and edge $e_i$ to $T$.
Repeat until all vertices of $G$ have been visited at least once.
Kandel et al show that $T$ is
selected uniformly from spanning trees of $G$ directed towards root $v$.

We apply this to random spanning trees in $G(m,q,k)$.
Let $v\in\Omega^{k-1}$. Write $v=b_{k-2} b_{k-3} \cdots b_1 b_0$.
Select $b_i\in\Omega$ (for $i\ge k-1$) uniformly
and form a random walk with
$v_i=b_{i+k-2}\cdots b_i$ and $e_i$ labelled by $b_{i+k-1}\cdots b_i$:
$$
b_{k-2} \cdots b_1 b_0
\xleftarrow{b_{k-1}\cdots b_0}
b_{k-1} \cdots b_2 b_1
\xleftarrow{b_{k}\cdots b_1}
b_{k} \cdots b_3 b_2
\xleftarrow{b_{k+1}\cdots b_2}
\cdots
$$
For each $a\in\Omega$,
the fraction of incoming edges of $v_i$ with form
$ab_{i+k-2}\cdots b_i$
is $m/(mq)=1/q$,
so $b_{i+k-1}$
is
 selected uniformly from $\Omega$.
The first time vertex $v_{i+1}=b_{i+k-1}\ldots b_{i+1}$ is
encountered, add
 edge $e_i=b_{i+k-1}\ldots b_{i+1} b_i$
to $T$. Terminate as soon as all vertices have been encountered at least once.

Another way to generate a spanning tree $T$ of $G(m,q,k)$ is to generate a spanning tree $T'$ of $G(1,q,k)$ by the above algorithm, and then, for each edge in $T'$, uniformly choose one of the corresponding $m$ edges in $G(m,q,k)$. The probability of each spanning tree of $G(m,q,k)$ is the same as by the previous algorithm.
A similar observation is made in~\cite[Eq.~(6.2)]{vanAE1951}.
Since we
do not need to distinguish between the $m$ edges of $G(m,q,k)$ with the same $k$-mer,
it suffices to find a spanning tree of $G(1,q,k)$ rather than $G(m,q,k)$.

\begin{figure}
\includegraphics[width=.89\textwidth]{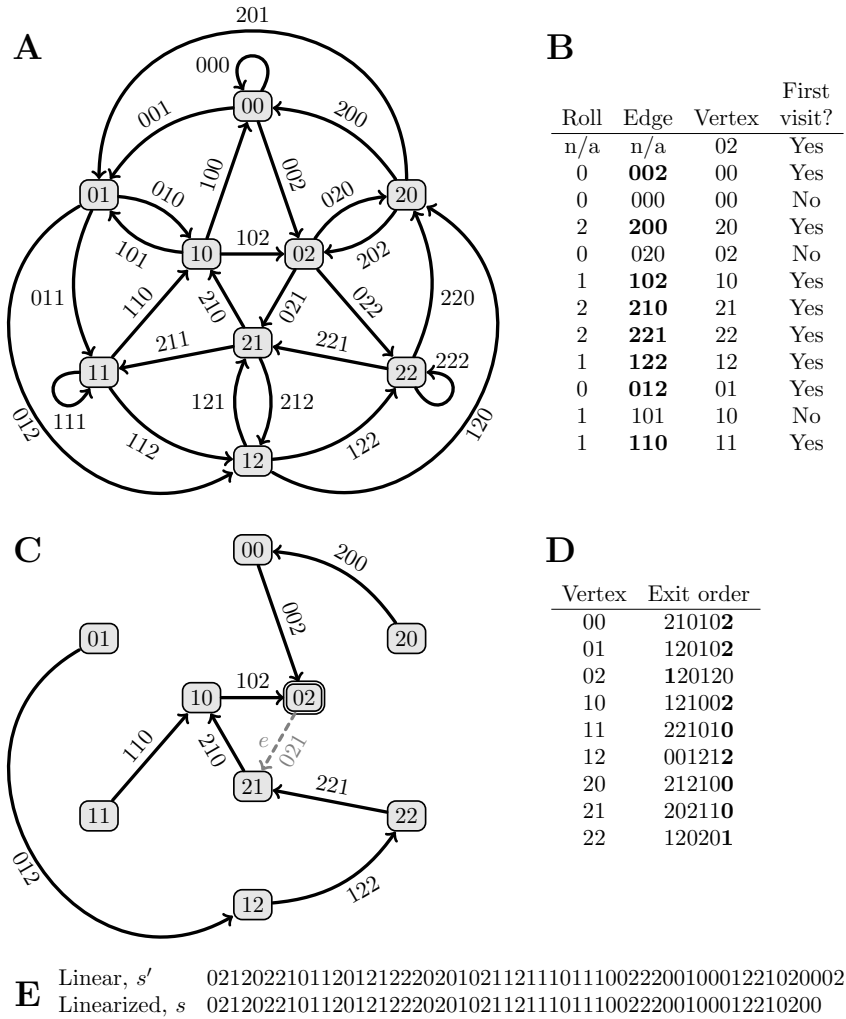}
\caption{Selecting a uniform random linear(ized) multi de Bruijn sequence
with
multiplicity $m=2$,
alphabet $\Omega=\{0,1,2\}$ of size $q=3$,
word size $k=3$, initial $k$-mer 021.
(A)~De Bruijn graph $G(1,q,k)$.
Edges are $k=3$-mers, vertices are $(k-1)=2$-mers.
(B)~Random process generates edges of a spanning tree
with root $v=02$.
(C)~Spanning tree (solid edges) with root $v=02$ (double border), plus
selected initial edge 021 (dashed).
(D)~Order to use outgoing edges at each vertex.
Each of the $q$ symbols occurs $m=2$ times.
Boldface in the last column encodes the tree; boldface in the first column
encodes initial edge; others are random.
(E)~Linear(ized) multi de Bruijn sequences
starting at $v=02$ and following edge orders (D).
}
\label{fig:rand_233}
\end{figure}

\begin{example}
\label{ex:random_tree}
Fig.~\ref{fig:rand_233}(A--C) illustrates finding a random spanning tree of $G(1,3,3)$ with root $v=02$.
Graph $G(1,3,3)$ is shown in Fig.~\ref{fig:rand_233}(A).
A fair 3-sided die gives rolls
$0,0,2,0,1,2,2,1,0,1,1,\ldots\,$,
shown in the first column of
Fig.~\ref{fig:rand_233}(B), leading to a random walk with edges and vertices indicated in the next columns.
On the first visit to each vertex (besides the root), its outgoing edge is boldfaced in panel (B) and added to the tree in panel (C).
\end{example}

Let $\codewords_{m,q}$ denote the set of all permutations of
$0^m 1^m \ldots (q-1)^m$; that is, sequences of length $mq$
with each symbol $0,\ldots,q-1$ appearing $m$ times.

For the graph $G(m,q,k)$,
we will replace
$(T,f)$ in the BEST Theorem bijection
by a function $g:\Omega^{k-1}\to\codewords_{m,q}$.
As we traverse a cycle $C$ starting on edge $e$, the successive visits to vertex $x\in\Omega^{k-1}$ have outgoing edges
with $k$-mers $xc_1,xc_2,\ldots,xc_{mq}$  (with $c_i\in\Omega$).
Set $g(x)=c_1 c_2 \ldots c_{mq}$.
For each $b\in\Omega$,
there are $m$ edges $xb$, so
exactly $m$ of the $c_i$'s equal $b$.
Thus, $g(x)\in\codewords_{m,q}$.
This encoding does not distinguish the $m$ edges $xb$ since permuting their order in the cycle does not change the sequence the cycle spells.
The first edge $e=(v,w)$ is encoded in the first position of $g(v)$;
the outgoing edge of $x$ in $T$ is
encoded in the last position of $g(x)$ for $x\ne v$;
and $f$ is encoded in the other $mq-1$ positions.

To pick a uniform random element of $\LC_y(m,q,k)$ or $\Lin(m,q,k)$:
\begin{itemize}
\item Input: $m,q,k$ (and $y$ for the linearized case).
\item In the linear case, select an initial $k$-mer $y$ uniformly from $\Omega^k$.
\item
Decompose $y=va$ ($v\in\Omega^{k-1}$, $a\in\Omega$).
\item
Select a uniform random spanning tree $T$ of $G(1,q,k)$ with root $v$.
\item
Each vertex
$x\in\Omega^{k-1}\setminus\{v\}$
has a unique outgoing edge in $T$, say $xb$ with $b\in\Omega$.
Set $g(x)$ to a random element of $\codewords_{m,q}$ ending in $b$.

\item Set $g(v)$ to a random element of $\codewords_{m,q}$ starting in $a$.

\item Traverse the cycle starting at $v$, following the edge order encoded in $g$, to form the sequence represented by this cycle.

\item
In the linearized case, delete the last $k-1$ characters of the sequence.

\end{itemize}

\begin{example}
Fig.~\ref{fig:rand_233} illustrates this algorithm for
$(m,q,k)=(2,3,3)$.
For a random element of $\Lin(m,q,k)$,
the initial $k$-mer is selected uniformly from $\Omega^k$; say $y=021$.
For a random element of $\LC_y(m,q,k)$, the $k$-mer $y$ is specified;
we specify $y=021$.
Panel (A) shows the de Bruijn graph $G(1,3,3)$.
Panels (B--C) select a random tree $T$ as described
in Example~\ref{ex:random_tree}.
Since $y=021$ (dashed edge in (C)), the root vertex is $v=02$; the $1$ in the last position
of $y$ is not used in selecting
$T$.

Panel (D) shows the order of exits from each vertex:
the function $g(x)=c_1\ldots c_{mq}$ gives that the $i$\textsuperscript{th} exit from $x$ is on edge $xc_i$.
Sequence $c_1\ldots c_{mq}$
is a permutation of 001122, with a constraint
(boldfaced in (D))
on either the first or last position and a random permutation on the other positions.
Since the linearization starts with 021, the first exit from vertex
$v=\,$02 is 1.
Tree $T$
is encoded in (D) as the last exit from
each vertex
except $v$;
e.g.,
since the tree has edge 002, the last exit from vertex 00 is 2.

To generate a 
sequence from (D),
start
 at $v=02$ with first exit 1
(edge 021).
At vertex 21, the first exit is 2 (edge 212).
At vertex 12, the first exit is 0 (edge 120).
At vertex 20, the first exit is 2 (edge 202).
Now we visit vertex 02 a second time. Its second exit is 2 (edge 022).
The sequence
so far is
0212022. Continue
until
reaching a vertex with no remaining exits;
this happens
on the $mq+1=7$\textsuperscript{th} visit to initial vertex $v=02$,
and yields a linear sequence (E).
The final $(k-1)$-mer, 02, duplicates the initial $(k-1)$-mer. Remove it from 
the end to obtain a linearization, also in (E).
\end{example}

\subsection{Random cyclic multi de Bruijn sequence}
\label{sec:random_cyclic}

Consider generating random cyclic sequences by
choosing a uniform random
element of $\Lin_y(m,q,k)$
and
circularizing it.
Each element of
$\Cyc^{(d)}(m,q,k)$ is represented by $m/d$ linearizations
and thus is generated with
probability $(m/d)/|\LC_y(m,q,k)|$,
which depends on $d$.
So this procedure does not select elements of $\Cyc(m,q,k)$ uniformly.
We will show how to adjust for $d$.

Partition $\LC_y(m,q,k)$ by rotation order, $d$:
$$
\LC_y(m,q,k) = \bigcup_{d|m} \LC^{(d)}_y(m,q,k) \;.
$$
We now construct certain subsets of this.
For each divisor $r$ of $m$, define
\begin{equation}
\LC_y(m,r;q,k) = \{ t^{m/r} : t \in \LC_y(r,q,k) \} \;.
\label{eq:define_LZ_y(m,r;q,k)}
\end{equation}
This set has size
$|\LC_y(m,r;q,k)|=|\LC_y(r,q,k)|$ and
this decomposition:
\begin{lemma}
\begin{equation}
\LC_y(m,r;q,k) =
\bigcup_{d: d|m \text{~and~} (m/r)|d} \LC^{(d)}_y(m,q,k) \;.
\label{eq:decompose_LZ_y(m,r;q,k)}
\end{equation}
\label{lem:decompose_LZ_y(m,r;q,k)}
\end{lemma}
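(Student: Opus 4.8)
The claim relates the "power set" $\LC_y(m,r;q,k) = \{t^{m/r} : t \in \LC_y(r,q,k)\}$ to a union of order-classified linearizations. Let me understand what's being asserted. We have $s = t^{m/r}$ where $t \in \LC_y(r,q,k)$. I need to figure out which order class $s$ lands in.

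Let me think about orders. If $t$ has rotation order $e$ (where $e | r$), then $t = u^e$ for some aperiodic-ish base $u$... wait, order $e$ means $t = u^e$ with $u$ of order 1. Then $s = t^{m/r} = u^{e \cdot (m/r)}$. So $s$ has order $d = e \cdot (m/r)$.

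Since $e | r$, we have $d = e(m/r)$ where $e$ ranges over divisors of $r$. The smallest value is $d = m/r$ (when $e=1$) and generally $d$ is a multiple of $m/r$ dividing $m$. So $(m/r) | d$ and $d | m$. Conversely every such $d$ arises. This matches the union on the RHS.

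**Proof plan.**

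The approach is to establish the set equality by decomposing both sides according to rotation order and showing the order of $t^{m/r}$ is exactly $(m/r)$ times the order of $t$. The plan is to first prove a clean lemma on how taking powers transforms rotation order: if $t$ has order $e$, then $t^{m/r}$ has order $e\cdot(m/r)$. This follows from the characterization already in the paper (order $e$ iff $t = u^e$ with $u$ of order $1$): writing $t = u^e$ gives $t^{m/r} = u^{e(m/r)}$, and since $u$ has order $1$ its distinct rotations are all distinct, forcing the order of $u^{e(m/r)}$ to be exactly $e(m/r)$. The mild care needed is verifying that no larger rotational symmetry sneaks in, i.e. that $u^{N}$ has order exactly $N$ when $u$ has order $1$; this is where I would invoke that all $|u|$ rotations of $u$ are distinct together with the length bookkeeping.

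First I would show the containment $\subseteq$. Take $s = t^{m/r}$ with $t \in \LC_y(r,q,k)$ and let $e$ be the order of $t$; then $e | r$. By the power lemma, $s$ has order $d := e(m/r)$. Since $t \in \LC_y(r,q,k)$ it begins with $y$, and repeating it does too, so $s \in \LC_y^{(d)}(m,q,k)$; moreover $(m/r) | d$ and $d | m$, so $s$ sits in the indicated union. For the reverse containment $\supseteq$, take $s \in \LC^{(d)}_y(m,q,k)$ with $d | m$ and $(m/r) | d$. Write $s = t^{m/r}$ by splitting $s$ into $m/r$ equal blocks — this is legitimate precisely because $(m/r) | d$ guarantees the rotational symmetry of $s$ is fine enough that the length-$(|s|/(m/r))$ prefix $t$ satisfies $t^{m/r} = s$. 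I then verify $t \in \LC_y(r,q,k)$: the count of each $k$-mer in $(t)$ is $m/(m/r) = r$ by Lemma \ref{lem:m/d}(a)-style reasoning applied in reverse, and $t$ begins with $y$ since $s$ does and $|t| \ge k$ for $q \ge 2$.

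**Main obstacle.** I expect the delicate point to be the reverse containment, specifically justifying that $s$ can be written as $t^{m/r}$ with the resulting $t$ of the correct multiplicity \emph{and} lying in $\LC_y(r,q,k)$ rather than merely in some $\LC_y^{(\cdot)}(r,q,k)$. The condition $(m/r)|d$ is exactly what makes the block decomposition consistent: since $s = w^{d}$ for a base $w$ of order $1$, and $(m/r) | d$, grouping the $d$ copies of $w$ into $m/r$ superblocks each of $d/(m/r)$ copies yields $t = w^{d/(m/r)}$, which has order $d/(m/r) = e$ dividing $r$, hence $t \in \LC_y(r,q,k)$. Keeping the divisibility bookkeeping $(m/r) | d | m \Leftrightarrow d = e(m/r),\ e | r$ straight throughout is the crux; the $q = 1$ edge case collapses everything to singletons and is handled trivially, as in Lemma \ref{lem:m/d}.
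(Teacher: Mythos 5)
Your proposal is correct and follows essentially the same route as the paper's proof: the forward containment via the fact that raising a sequence of order $e$ to the $(m/r)$\textsuperscript{th} power yields order $e\cdot(m/r)$, and the reverse containment by using $(m/r)\,|\,d$ to split $s$ into $m/r$ equal blocks whose common value $t$ lies in $\LC_y(r,q,k)$ with order $d/(m/r)$. The paper states these steps more tersely; your additional bookkeeping (the primitive-base argument for the order of a power, the $k$-mer count $m/(m/r)=r$, and the initial-$k$-mer check via $|t|\ge k$) fills in details the paper leaves implicit, but the underlying argument is identical.
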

\begin{proof}
Given $t\in\LC_y(r,q,k)$ of order $d'$,
then $t^{m/r}$ is in $\LC_y(m,q,k)$ and has order $d=(m/r)d'$.
Thus, the left side of~(\ref{eq:decompose_LZ_y(m,r;q,k)}) is a subset of the right side.
Conversely, consider $s\in\LC^{(d)}_y(m,q,k)$ and suppose
$(m/r)|d$. Then $s$ splits into $m/r$ equal segments: $s=t^{m/r}$.
Each $t$ is in $\LC_y(r,q,k)$ and has order $d'=d/(m/r)$.
So the right side of~(\ref{eq:decompose_LZ_y(m,r;q,k)}) is a subset of the left side.
\end{proof}

Now consider the following random process.
Fix $y\in\Omega^k$; it is sufficient to use $y=0^k$.
Let $p(r)$ be a probability distribution on the positive integer divisors of $m$. Generate a random element $\sigma$ of $\Cyc(m,q,k)$ as follows:
\begin{itemize}
\item Select a random divisor $r$ of $m$ with probability $p(r)$.
\item Select a uniform random sequence $t$ from $\LC_y(r,q,k)$.
\item Output $\sigma=(t^{m/r})$.
\end{itemize}

Consider all ways each
$\sigma\in\Cyc(m,q,k)$ can be generated by this algorithm.
Let $d$ be the order of $\sigma$.
Each $r|m$ is selected
with probability $p(r)$.
If $(m/r)|d$,
then by Lemma~\ref{lem:decompose_LZ_y(m,r;q,k)},
the $m/d$ linearizations of $\sigma$ beginning with $y$ are contained
in $\LC^{(d)}_y(m,r;q,k)$, and one of them may be generated
as $t^{m/r}$
with probability $(m/d)/|\LC_y(r,q,k)|$.
But if $m/r$ does not divide into $d$,
these linearizations
will not be generated.
In total,
\begin{equation}
P(\sigma) =
\sum_{\substack{r: \; r|m \\ \text{~and~} (m/d)|r}}
p(r) \cdot \frac{m/d}{|\LC_y(r,q,k)|} .
\label{eq:Cyc_d_probability}
\end{equation}
The following gives
$P(\sigma)=1/|\Cyc(m,q,k)|$
for all $\sigma\in\Cyc(m,q,k)$.
For $r|m$, set
\begin{align}
p(r) &= \frac{\phi(m/r)}{m} \cdot \frac{|\LC_y(r,q,k)|}{|\Cyc(m,q,k)|}
\label{eq:p(r)}
=
\frac{\phantom{\sum_{r|m}} \phi(m/r) \cdot (rq)!^{q^{k-1}} / r!^{q^k}}
	{\sum_{d|m}\phi(m/d) \cdot (dq)!^{q^{k-1}} / d!^{q^k}}
\end{align}
We used~(\ref{eq:S_y(m,q,k)}) and~(\ref{eq:Bcirc(m,q,k)})
to evaluate this.
To verify that $\sigma$ is selected uniformly, plug the
middle expression in~(\ref{eq:p(r)}) into~(\ref{eq:Cyc_d_probability}),
with $d$ equal to the order of $\sigma$:
\begin{align}
P(\sigma) &=
\sum_{\substack{r: \; r|m \\ \text{~and~} (m/r)|d}}
\frac{\phi(m/r)}{m} \cdot  \frac{|\LC_y(r,q,k)|}{|\Cyc(m,q,k)|}
\cdot
\frac{m/d}{|\LC_y(r,q,k)|}
\\
&=
\frac{1}{d\cdot |\Cyc(m,q,k)|}
\sum_{\substack{r: \; r|m \\ \text{~and~} (m/r)|d}}
\phi(m/r)
\label{eq:eval_P(s)}
\\
&=
\frac{1}{d\cdot |\Cyc(m,q,k)|} \cdot d
=
\frac{1}{|\Cyc(m,q,k)|} .
\end{align}
To evaluate the summation in~(\ref{eq:eval_P(s)}), substitute $u=m/r$.
Variable $u$ runs over divisors of $m$ that also divide $d$.
Since $d|m$, this is equivalent to $u|d$, so
\begin{align*}
\sum_{\substack{r: \; r|m \\ \text{~and~} (m/r)|d}}
\phi(m/r)
&=
\sum_{u|d} \phi(u) = d
\;.
\end{align*}

\section{Multicyclic de Bruijn Sequences and the Extended Burrows-Wheeler Transformation}
\label{sec:MCDB_EBWT}

\subsection{Multicyclic sequences}
Higgins~\cite{Higgins2012} defined a generalization of de Bruijn
sequences called \emph{de Bruijn sets} and showed how to generate them using an extension~\cite{Mantaci2005,Mantaci2007} of the Burrows-Wheeler Transformation (BWT)~\cite{BurrowsWheeler1994}. We
 generalize this
 to incorporate our multiplicity parameter $m$.

The length of a sequence $s$ is denoted $|s|$.
For $i\ge 0$,
let $s^i$ denote concatenating $i$ copies of $s$.
A sequence $s$ is \emph{primitive} if its length is positive and
$s$ is not a power of a shorter word.
Equivalently, a nonempty sequence $s$ is primitive iff the cycle $(s)$ is \emph{aperiodic},
that is, $|s|>0$ and the $|s|$ cyclic rotations of $s$ are distinct.
The \emph{root} of $s$ is the shortest prefix $t$ of $s$ such that
$s=t^d$ for some $d\ge 1$;
note that the root is primitive and
$d=|s|/|t|$ is the rotation order of $s$.
See~\cite[Sec.~3.1]{Mantaci2005},
\cite[Sec.~2.1]{Mantaci2007}, and~\cite[p.~129]{Higgins2012}.

For example, $abab$ is not primitive, but its root $ab$ is,
so $(ab)$ is an aperiodic cycle while $(abab)$ is not.
A \emph{multicyclic sequence} is a multiset of aperiodic cycles;
let $\Multicyc$ denote the set of all multicyclic sequences.
In $\Multicyc$,
instead of multiset notation
$\{(s_1),(s_2),\ldots,(s_r)\}$,
we will use cycle notation
$(s_1)(s_2)\ldots(s_r)$,
where different orderings of the cycles and different rotations are
considered to be equivalent;
this resembles permutation cycle notation, but symbols may appear multiple times and cycles are not composed as permutations.
For example, $\sigma=(a)(a)(ababbb)$
denotes a multiset $\{(a),(a),(ababbb)\}$
with two distinct cycles $(a)$ and one of $(ababbb)$.
This representation is not unique; e.g., $\sigma$ could also be
written as $(a)(bbabab)(a)$.

The \emph{length} of $\sigma=(s_1)\ldots(s_r)$ is $|\sigma|=\sum_{i=1}^r |s_i|$;
 e.g., $|(a)(a)(ababbb)|=1+1+6=8$. For $n\ge0$, let $\Multicyc_n = \{\sigma\in\Multicyc: |\sigma|=n\}$.

For $r\ge 0$,
the notation $(s)^r$
denotes $r$ copies of cycle $(s)$, while $\linstr{s}^r$
denotes concatenating
$r$ copies of $s$.
 E.g., $(ab)^3=(ab)(ab)(ab)$ while $\linstr{ab}^3=ababab$.
For $\sigma\in\Multicyc$, $\sigma^r$ denotes multiplying each cycle's multiplicity by $r$.

Let $s,w$ be nonempty linear sequences with $s$ primitive.
The number of occurrences of $w$
in $(s)$, denoted $\numoccur((s),w)$, is
$$|\{ j=0,1,\ldots,|s|-1 : \text{$w$ is a prefix of some power of $\rho^j(s)$} \}| \;.$$
For each $j$, it suffices to check if $s$ is a prefix of
$\linstr{\rho^j(s)}^{\CEIL{|s|/|w|}}$.
For example,
$(ab)$ has one occurrence of $babab$, because
$babab$ is a prefix of 
$\linstr{ba}^{\CEIL{5/2}}=\linstr{ba}^3=bababa$.
The number of occurrences of $w$ in $\sigma\in\Multicyc$ is
\begin{equation}
\numoccur(\sigma,w) = \sum_{(s)\in \sigma} \numoccur((s),w)
\end{equation}
where each cycle $(s)$ is repeated
with its multiplicity in $\sigma$.
For example, $bab$ occurs five times in $(ab)(ab)(ab)(baababa)$:
once in each $(ab)$ as a prefix of $\linstr{ba}^2=baba$, and twice in $(baababa)$ as prefixes of
 $bababaa$ and $babaaba$.

A \emph{multicyclic de Bruijn sequence} with parameters $(m,q,k)$ is
a multicyclic sequence
$\sigma\in\Multicyc$ over an alphabet $\Omega$ of size $q$,
such that every $k$-mer in $\Omega^k$ occurs exactly
$m$ times in $\sigma$.
Let $\MCDB(m,q,k)$ denote the set of such $\sigma$.
Higgins~\cite{Higgins2012} introduced the case $m=1$
and called it a \emph{de Bruijn set}.

\begin{example}
Each of
$aa,ab,ba,bb$ occurs twice in
$(a)(a)(ababbb)$.
$aa$ occurs once in the ``first'' $(a)$ as a prefix
of $a^2$, and once in the ``second'' $(a)$.
$ab,ba,bb$ each occur twice in $(ababbb)$,
including the occurrence of $ba$ that wraps around and the
two overlapping occurrences of $bb$.
This is a multicyclic de Bruijn sequence with
multiplicity $m=2$, alphabet size $q=2$, and word size $k=2$.
Table~\ref{tab:(m,q,k)=(2,2,2)}(D)
lists all multicyclic sequences with these parameters.
\end{example}

\begin{table}
\begin{tabular}{ll}
Definition & Notation \\ \hline
Linear sequence & $abc$ or $\linstr{abc}$	 \\
Cyclic sequence & $(abc)$ \\
Power of a linear sequence & $\linstr{abc}^r = abcabc\cdots$, $r$ times \\
Repeated cycle & $(abc)^r = (abc)(abc)\cdots$, $r$ times \\
Set of all multisets of aperiodic cycles & $\Multicyc$ \\
\quad(a.k.a.\ multicyclic sequences) & \\
Set of multicyclic sequences of length $n$	& $\Multicyc_n$ \\
Set of permutations of $0^m 1^m \cdots (q-1)^m$ & $\codewords_{m,q}$ \\
Burrows-Wheeler Transform & BWT \\
Extended Burrows-Wheeler Transform & EBWT \\
BWT table from forward transform & $\TB(s)$; $n\times n$ with $s\in\Omega^n$ \\
BWT table from inverse transform & $\TIB(w)$; $n\times n$ with $w\in\Omega^n$ \\
EBWT table from forward transform & $\TE(\sigma)$; $n\times c$ with $\sigma\in\Multicyc_n$ \\
EBWT table from inverse transform & $\TIE(w)$; $n\times c$ with $w\in\Omega^n$ \\
\end{tabular}

\smallskip

\caption{Notation for Extended Burrows-Wheeler Transformation}
\label{tab:notation_EBWT}

\end{table}

\subsection{Extended Burrows-Wheeler Transformation (EBWT)}
\label{sec:EBWT}

The Burrows-Wheeler Transformation~\cite{BurrowsWheeler1994}
is a certain map $\BWT:\Omega^n\to\Omega^n$ (for $n\ge 0$)
that preserves the number of times each character occurs.
For $n,q\ge 2$, it is neither injective nor surjective~\cite[p.~300]{Mantaci2007},
but it can be inverted
in practical applications
via
a ``terminator character'' (see Sec.~\ref{sec:BWT_vs_EBWT}).

Mantaci et al~\cite{Mantaci2005,Mantaci2007}
introduced the Extended Burrows-Wheeler Transformation, $\EBWT:\Multicyc_n\to\Omega^n$,
which modifies
the BWT to make it bijective.
As with the BWT, the EBWT also preserves the number
of times each character occurs.
EBWT is equivalent to a bijection
$\Omega^n\to\Multicyc_n$
of Gessel and Reutenauer~\cite[Lemma~3.4]{GesselReutenauer1993},
but computed by an algorithm similar to the BWT rather than
the method in~\cite{GesselReutenauer1993}.
There is also an earlier bijection
$\Omega^n\to\Multicyc_n$
by de Bruijn and Klarner~\cite[Sec.~4]{deBruijnKlarner1982}
based on factorization into 
Lyndon words~\cite[Lemma~1.6]{ChenFoxLyndon1958},
but it is not equivalent to the bijection in~\cite{GesselReutenauer1993}.

Higgins~\cite{Higgins2012} provided further analysis of
$\EBWT$ and used it to construct
de Bruijn sets.
We further generalize this to multicyclic de Bruijn sequences, as defined above.
We will give a brief description of the $\EBWT$ algorithm
and its inverse;
 see~\cite{Mantaci2005,Mantaci2007,Higgins2012} for proofs.

Let $\sigma=(s_1)\cdots(s_r)\in\Multicyc$.
Let $c$ be the least common multiple of
$|s_1|,\ldots,|s_r|$, and let $n=|\sigma|=\sum_{i=1}^r |s_i|$.

Consider a rotation
$t=\rho^j(s_i)$
of a cycle of $\sigma$.
Then $u=t^{c/|s_i|}$ has
 length $c$,
and $t$ can be recovered from $u$
as the root of $u$.

Construct a table $\TE(\sigma)$ with $n$ rows, $c$ columns,
and entries in $\Omega$:
\begin{itemize}
\item
Form a table of the powers of the rotations of each cycle of $\sigma$:
the $i$\textsuperscript{th} cycle ($i=1,\ldots,r$)
generates rows
$\linstr{\rho^j(s_i)}^{c/|s_i|}$ for $j=0,\ldots,|s_i-1|$.

\item
Sort the rows into lexicographic order to form $\TE(\sigma)$.
\end{itemize}
The \emph{Extended Burrows-Wheeler Transform of $\sigma$},
denoted $\EBWT(\sigma)$, is the linear sequence given by the last
column of $\TE(\sigma)$ read from top to bottom.

\begin{example}
\label{ex:EBWT_ex1}
Let $\sigma=(0001)(011)(1)$.
The number of columns is
$c=\lcm(4,3,1)=12$.
The rotations of each cycle are shown on the left, and the table $\TE(\sigma)$ obtained by sorting these is shown on the right:
$$
{\arraycolsep=3pt
\begin{array}{lcccccccccccc}
s_i & \multicolumn{12}{l}{\text{Powers of rotations of $s_i$}} \\ \hline
0001	& 0 & 0 & 0 & 1 & 0 & 0 & 0 & 1 & 0 & 0 & 0 & 1 \\
	& 1 & 0 & 0 & 0 & 1 & 0 & 0 & 0 & 1 & 0 & 0 & 0 \\
	& 0 & 1 & 0 & 0 & 0 & 1 & 0 & 0 & 0 & 1 & 0 & 0 \\
	& 0 & 0 & 1 & 0 & 0 & 0 & 1 & 0 & 0 & 0 & 1 & 0 \\ \hline
011	& 0 & 1 & 1 & 0 & 1 & 1 & 0 & 1 & 1 & 0 & 1 & 1 \\
	& 1 & 0 & 1 & 1 & 0 & 1 & 1 & 0 & 1 & 1 & 0 & 1 \\
	& 1 & 1 & 0 & 1 & 1 & 0 & 1 & 1 & 0 & 1 & 1 & 0 \\ \hline
1	& 1 & 1 & 1 & 1 & 1 & 1 & 1 & 1 & 1 & 1 & 1 & 1 \\
\end{array}}
\qquad
{\arraycolsep=3pt
\begin{array}{cccccccccccc}
 \multicolumn{12}{l}{\TE(\sigma)} \\ \hline
	 0 & 0 & 0 & 1 & 0 & 0 & 0 & 1 & 0 & 0 & 0 & 1 \\
	 0 & 0 & 1 & 0 & 0 & 0 & 1 & 0 & 0 & 0 & 1 & 0 \\
	 0 & 1 & 0 & 0 & 0 & 1 & 0 & 0 & 0 & 1 & 0 & 0 \\
	 0 & 1 & 1 & 0 & 1 & 1 & 0 & 1 & 1 & 0 & 1 & 1 \\
	 1 & 0 & 0 & 0 & 1 & 0 & 0 & 0 & 1 & 0 & 0 & 0 \\
	 1 & 0 & 1 & 1 & 0 & 1 & 1 & 0 & 1 & 1 & 0 & 1 \\
	 1 & 1 & 0 & 1 & 1 & 0 & 1 & 1 & 0 & 1 & 1 & 0 \\
	 1 & 1 & 1 & 1 & 1 & 1 & 1 & 1 & 1 & 1 & 1 & 1 \\
\end{array}
}
$$
The last column of
 $\TE(\sigma)$
 gives $\EBWT((0001)(011)(1))=10010101$.
\end{example}

\begin{example}
\label{ex:EBWT_ex2}
Let $\sigma=(0011)(0011)$.
Then $c=\lcm(4,4)=4$.
Since $(0011)$ has multiplicity 2,
each rotation of $0011$ generates two equal rows.
The last column of
 $\TE(\sigma)$
 gives $\EBWT((0011)(0011))=11001100$:
$$
{\arraycolsep=3pt
\begin{array}{lcccccc}
s_i & \\ \hline
0011	& 0 & 0 & 1 & 1 \\
	& 1 & 0 & 0 & 1 \\
	& 1 & 1 & 0 & 0 \\
	& 0 & 1 & 1 & 0 \\ \hline
0011	& 0 & 0 & 1 & 1 \\
	& 1 & 0 & 0 & 1 \\
	& 1 & 1 & 0 & 0 \\
	& 0 & 1 & 1 & 0 \\
\end{array}}
\qquad
{\arraycolsep=3pt
\begin{array}{cccc}
\multicolumn{4}{l}{\TE(\sigma)} \\ \hline
	 0 & 0 & 1 & 1 \\
	 0 & 0 & 1 & 1 \\
	 0 & 1 & 1 & 0 \\
	 0 & 1 & 1 & 0 \\
	 1 & 0 & 0 & 1 \\
	 1 & 0 & 0 & 1 \\
	 1 & 1 & 0 & 0 \\
	 1 & 1 & 0 & 0 \\
\end{array}}
$$
\end{example}

\subsection{Inverse of Extended Burrows-Wheeler Transformation}
\label{sec:invEBWT}
Without loss of generality, take $\Omega=\{0,1,\ldots,q-1\}$.
Let $w\in\Omega^n$.
Mantaci et al~\cite{Mantaci2005,Mantaci2007} give
the following method to
construct
$\sigma\in\Multicyc_n$
 with $\EBWT(\sigma)=w$.
Also see~\cite[Lemma~3.4]{GesselReutenauer1993} (which gives the same map but without relating it to the Burrows-Wheeler Transform) and~\cite[Theorem~1.2.11]{Higgins2012}.

Compute
the \emph{standard permutation} of $w=a_0 a_1 \ldots a_{n-1}$ as follows.
\begin{itemize}
\item
For each $i\in\Omega$, let $h_i$ be the number of times $i$ occurs in
$w$, and let the positions of the $i$'s be
$p_{i0} < p_{i1} < \cdots < p_{i,h_i-1} \;.$
\item
Form partial sums $H_i = \sum_{j<i} h_j$ (for $i\in\Omega$) and $H_q=\sum_{j\in\Omega} h_j=n$.
\item
The \emph{standard permutation} of $w$ is the permutation
$\pi$ on $\{0,1,\ldots,n-1\}$ defined by
$\pi(H_i+j) = p_{i,j}$ for $i\in\Omega$ and $j=0,\ldots,h_i-1$.
\end{itemize}

Compute
$\EBWT^{-1}:\Omega^*\to\Multicyc$,
 the \emph{inverse Extended Burrows-Wheeler Transform\/},
as follows:
\begin{itemize}
\item Compute $\pi(w)$ and
express it in permutation cycle form.
\item
In each cycle of $\pi(w)$, replace
entries
 in the range $[H_i,H_{i+1}-1]$ by
$i$.
\item
This forms an element $\sigma\in\Multicyc$. Output $\sigma$.
\end{itemize}
Mantaci et al~\cite{Mantaci2005}, \cite[Theorem~20]{Mantaci2007}
prove that these algorithms for
$\EBWT$ and $\EBWT^{-1}$
satisfy $\EBWT(\sigma)=w$ iff $\EBWT^{-1}(w)=\sigma$. Thus,
\begin{theorem}[Mantaci et al]
$EBWT:\Multicyc_n\to\Omega^n$ is a bijection.
\end{theorem}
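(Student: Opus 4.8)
The plan is to read the theorem as the assertion that the two explicitly described algorithms, $\EBWT$ and $\EBWT^{-1}$, are two-sided inverses, and to deduce bijectivity from this. Concretely, the biconditional stated just above (due to Mantaci et al) says that for every $\sigma \in \Multicyc_n$ and every $w \in \Omega^n$ we have $\EBWT(\sigma)=w$ if and only if $\EBWT^{-1}(w)=\sigma$. Taking $w = \EBWT(\sigma)$ gives $\EBWT^{-1}(\EBWT(\sigma)) = \sigma$, so $\EBWT$ is injective; taking $\sigma = \EBWT^{-1}(w)$ gives $\EBWT(\EBWT^{-1}(w)) = w$, so $\EBWT$ is surjective. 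Hence $\EBWT$ is a bijection with inverse $\EBWT^{-1}$. I would first note that $\EBWT$ is well defined on multisets: because the rows of $\TE(\sigma)$ are sorted before the last column is read, the output depends neither on the chosen ordering of the cycles of $\sigma$ nor on which rotation of each cycle is listed first.

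The substance is therefore the biconditional itself, which I would establish through the last-to-first (LF) mapping on the rows of $\TE(\sigma)$. Index the $n$ sorted rows $0,1,\ldots,n-1$; let $F$ denote the first column and $L = \EBWT(\sigma)$ the last. Since $F$ is the sorted multiset of leading symbols, the rows whose leading symbol is $i$ occupy the contiguous block $[H_i,H_{i+1})$, where $H_i$ is exactly the partial sum appearing in the definition of the standard permutation. The key claim is that the standard permutation $\pi$ of $L$ coincides with the permutation sending each row to the row obtained by rotating its underlying cycle one step so that the row's last symbol becomes its first symbol. Granting this, the cycles of $\pi$ trace, for each cycle $(s_i)$ of $\sigma$, the successive rotations of $s_i$ in order; relabeling each block position by its leading symbol $i$ (the relabeling step of $\EBWT^{-1}$) reads off one rotation of $s_i$, so each $\pi$-cycle reconstructs the cyclic word $(s_i)$, and a cycle carried with multiplicity $\mu$ in $\sigma$ produces $\mu$ identical $\pi$-cycles, recovering the multiset structure. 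This yields $\EBWT^{-1}(\EBWT(\sigma)) = \sigma$.

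To prove the key claim, I would use the classical BWT invariant adapted to the periodic rows. Each row is a power $\linstr{\rho^j(s_i)}^{c/|s_i|}$ of length $c$, and rotating the underlying cycle one step moves the row's last symbol to the front. Two rows ending in the same symbol $i$, written $X\,i$ and $Y\,i$, rotate to $i\,X$ and $i\,Y$, which both begin with $i$ and hence sort within $[H_i,H_{i+1})$ according to the comparison of $X$ versus $Y$ — the very comparison that ordered $X\,i$ and $Y\,i$ among all rows ending in $i$. Thus the top-to-bottom order of rows ending in $i$ equals the order of their once-rotated images within $[H_i,H_{i+1})$, i.e.\ the $t$-th row from the top ending in $i$ maps to position $H_i + t$; comparing with $\pi(H_i+j) = p_{i,j}$ identifies $\pi$ with the LF correspondence, as claimed. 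The reverse composition $\EBWT(\EBWT^{-1}(w)) = w$ then follows by reading this same correspondence in the other direction: the cycles of the standard permutation of an arbitrary $w$ define a $\sigma$ whose sorted table has $w$ as its last column.

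The main obstacle is making the LF invariance rigorous in the two features distinguishing this setting from the single-string BWT: the rows are length-$c$ powers rather than bare rotations, and cycles (hence rows) may repeat. For the first, I must check that lexicographic comparison of the length-$c$ powers agrees with the natural cyclic-word comparison of the underlying rotations, so that ``the same comparison orders $X$ and $Y$ before and after rotation'' is literally true; the choice $c=\lcm(|s_1|,\ldots,|s_r|)$ is exactly what forces every row to have equal length and makes this alignment exact. For the second, I must ensure that ties among identical rows are broken consistently, so that $\pi$ is well defined and the recovered cycles emerge primitive with correct multiplicities — equal rows yield equal-length $\pi$-cycles, and each reconstructed root is primitive because the cycles of $\sigma$ were assumed aperiodic.
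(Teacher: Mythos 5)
Your proposal is correct, and its logical core is exactly what the paper does: the paper offers no internal proof of this theorem, but cites Mantaci et al for precisely the biconditional $\EBWT(\sigma)=w \iff \EBWT^{-1}(w)=\sigma$, and the theorem follows by the two substitutions you spell out in your first paragraph (left and right inverse, hence bijection). Where you genuinely depart from the paper is in going on to sketch a proof of the cited biconditional itself via the last-to-first correspondence; the paper defers this entirely to the references, so your version buys self-containedness at the price of having to redo the Mantaci--Higgins analysis. Your sketch follows the standard route and its forward direction is sound: rows of $\TE(\sigma)$ are closed under the rotation $\rho$ (a power of $\rho^{j}(s_i)$ rotates to the power of $\rho^{j+1}(s_i)$, since all rows have the common length $c$), rows ending in the same symbol preserve their relative order after rotation, and stable tie-breaking among equal rows makes $\pi$ match the rotation map, so $\EBWT^{-1}(\EBWT(\sigma))=\sigma$. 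The one thin spot is the direction needed for surjectivity, which you compress into a single sentence: for an \emph{arbitrary} $w\in\Omega^n$ you must show that the relabeled cycles of the standard permutation are aperiodic, and there your justification (``each reconstructed root is primitive because the cycles of $\sigma$ were assumed aperiodic'') is circular, since no $\sigma$ is given in advance. The standard repair uses the fact that $\pi$ is increasing on each block $[H_i,H_{i+1})$: if two distinct positions $a<b$ had identical label orbits, induction would give $\pi^t(a)<\pi^t(b)$ for all $t$, which is incompatible with a relabeled cycle being a proper power (one would get an infinite strictly increasing sequence inside a finite cycle). You would also need to verify that the table rebuilt from the recovered cycles has $w$ as its last column. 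Since the theorem as stated rests on the citation, these omissions do not invalidate your proof of the theorem itself; they are what separates your supplementary sketch from a complete replacement for the cited result.
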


\begin{example}
Let $w=a_0\ldots a_7 = 10010101$.
The four 0's have
positions 1, 2, 4, 6, so
$\pi(0)=1$, $\pi(1)=2$, $\pi(2)=4$, $\pi(3)=6$.
The four 1's have
positions 0, 3, 5, 7, so
$\pi(4)=0$, $\pi(5)=3$, $\pi(6)=5$, $\pi(7)=7$.
In cycle form, this permutation is
$\pi=(0,1,2,4)(3,6,5)(7)$.
In the cycle form,
replace $0,1,2,3$ by 0 and $4,5,6,7$ by 1 to obtain
$\sigma=(0,0,0,1)(0,1,1)(1)$.
Thus, $\EBWT^{-1}(10010101)=(0001)(011)(1)$.
\end{example}

The inverse EBWT may also be computed by constructing
the EBWT table $\TIE(w)$ of a linear sequence $w$.
This is analogous to the procedure used with the original BWT~\cite[Algorithm~D]{BurrowsWheeler1994},
and was made explicit for the EBWT in~\cite[pp.~130--131]{Higgins2012};
also see~\cite[pp.~182-183]{Mantaci2005} and~\cite[Theorem~20]{Mantaci2007}.
Computing the EBWT by this procedure is useful for
theoretical 
 analysis, but the standard permutation is more efficient for computational purposes.
\begin{itemize}
\item Input: $w\in\Omega^n$.
\item Form a table with $n$ empty rows.
\item Let $c=0$. This is the number of columns constructed so far.
\item Repeat the following until the last column equals $w$:
	\begin{itemize}
	\item Extend the table from $n\times c$ to $n\times(c+1)$ by
		shifting all $c$ columns one to the right and filling in the first column with $w$.
	\item Sort the rows lexicographically.
	\item Increment $c$.
	\end{itemize}
\item Output the $n\times c$ table $\TIE(w)$.
\end{itemize}

A \emph{Lyndon word}~\cite{Lyndon1954} is a linear sequence that is primitive and is lexicographically smaller than its nontrivial cyclic rotations.
To compute the inverse EBWT, form $\TIE(w)$ and take the primitive
root of each row. Output the multiset of cycles generated by
the primitive roots that are Lyndon words.
Note that if $w=\EBWT(\sigma)$, then the tables match,
$\TE(\sigma)=\TIE(w)$, and the inverse constructed this way satisfies
$\EBWT^{-1}(w)=\sigma$.
Mantaci et al~\cite{Mantaci2005,Mantaci2007} also consider
selecting other linearizations of the cycles,
rather than the Lyndon words or the cycles they generate.

\begin{example}
For $w=10010101$,
the table
$\TIE(10010101)$ is identical to the
righthand table
in Example~\ref{ex:EBWT_ex1}.
The primitive roots of the rows are
0001, 0010, 0100, 011, 1000, 101, 110, 1.
The Lyndon words among these are 0001, 011, and 1,
giving $\EBWT^{-1}(10010101)=(0001)(011)(1)$.
\end{example}

\begin{example}
For $w=11001100$, the table $\TIE(11001100)$ is identical to the
righthand table in Example~\ref{ex:EBWT_ex2}.
The primitive roots of the rows are
0011, 0011, 0110, 0110, 1001, 1001, 1100, 1100.
The Lyndon word among these is 0011 with multiplicity 2, giving
$\EBWT^{-1}(11001100)=(0011)(0011)$. 
\end{example}

\subsection{Applying the EBWT to multicyclic de Bruijn sequences}

\begin{table}
\begin{tabular}{ll}
$w$ & $\sigma$ \\ \hline
 0011 0011 & (0)(0)(01)(01)(1)(1) \\
 0011 0101 & (0)(0)(01)(011)(1) \\
 0011 0110 & (0)(0)(01)(0111) \\
 0011 1001 & (0)(0)(01011)(1) \\
 0011 1010 & (0)(0)(010111) \\
 0011 1100 & (0)(0)(011)(011) \\
 0101 0011 & (0)(001)(01)(1)(1) \\
 0101 0101 & (0)(001)(011)(1) \\
 0101 0110 & (0)(001)(0111) \\
 0101 1001 & (0)(001011)(1) \\
 0101 1010 & (0)(0010111) \\
 0101 1100 & (0)(0011)(011) \\
 0110 0011 & (0)(00101)(1)(1) \\
 0110 0101 & (0)(001101)(1) \\
 0110 0110 & (0)(0011101) \\
 0110 1001 & (0)(0011)(01)(1) \\
 0110 1010 & (0)(00111)(01) \\
 0110 1100 & (0)(0011011) \\
\end{tabular}
\qquad
\begin{tabular}{ll}
 $w$ & $\sigma$ \\ \hline
 1001 0011 & (0001)(01)(1)(1) \\
 1001 0101 & (0001)(011)(1) \\
 1001 0110 & (0001)(0111) \\
 1001 1001 & (0001011)(1) \\
 1001 1010\textsuperscript{*} & (00010111)\textsuperscript{*} \\
 1001 1100 & (00011)(011) \\
 1010 0011 & (000101)(1)(1) \\
 1010 0101 & (0001101)(1) \\
 1010 0110\textsuperscript{*} & (00011101)\textsuperscript{*} \\
 1010 1001 & (00011)(01)(1) \\
 1010 1010 & (000111)(01) \\
 1010 1100\textsuperscript{*} & (00011011)\textsuperscript{*} \\
 1100 0011 & (001)(001)(1)(1) \\
 1100 0101 & (001)(0011)(1) \\
 1100 0110 & (001)(00111) \\
 1100 1001 & (0010011)(1) \\
 1100 1010\textsuperscript{*} & (00100111)\textsuperscript{*} \\
 1100 1100\textsuperscript{*} & (0011)(0011)\textsuperscript{*} \\
\end{tabular}

\smallskip

\caption{$\MCDB(2,2,2)$: Multicyclic de Bruijn sequences $\sigma$ for 
$m=q=k=2$,
and their Extended Burrows-Wheeler Transforms $w=\EBWT(\sigma)$.
Entries corresponding to cyclic multi de Bruijn sequences $\Cyc(2,2,2)$ are marked `*'.}
\label{tab:mcdb(2,2,2)}
\end{table}

We have the following generalization of Higgins~\cite[Theorem~3.4]{Higgins2012}, which was for the case $m=1$. Our proof is similar to the one in~\cite{Higgins2012} but introduces the multiplicity $m$.
Table~\ref{tab:mcdb(2,2,2)} illustrates the theorem for
$(m,q,k)=(2,2,2)$.

\begin{theorem}
For $m,q,k\ge 1$,

(a)~The image of $\MCDB(m,q,k)$ under $\EBWT$ is $(\codewords_{m,q})^{q^{k-1}}$.

(b)~$
|\MCDB(m,q,k)|=
W(m,q,k) = \frac{(mq)!^{q^{k-1}}}{m!^{q^k}}
\;.
$
\label{thm:MCDB_bijection}
\end{theorem}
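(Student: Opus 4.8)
The plan is to prove part~(a) and deduce part~(b) for free. First I would record that every $\sigma\in\MCDB(m,q,k)$ satisfies $|\sigma|=mq^k$: each of the $q^k$ distinct $k$-mers contributes $m$ starting positions, and these starting positions are exactly the $n=mq^k$ rows of the table $\TE(\sigma)$, with the ``powers of rotations'' convention handling cycles shorter than $k$ in accordance with the definition of $\numoccur$. Since each element of $\codewords_{m,q}$ has length $mq$ and $|\codewords_{m,q}|=(mq)!/m!^q$, the set $(\codewords_{m,q})^{q^{k-1}}$ consists of length-$mq^k$ words and has cardinality $\left((mq)!/m!^q\right)^{q^{k-1}}=W(m,q,k)$. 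Because $\EBWT:\Multicyc_n\to\Omega^n$ is a bijection, restricting it to $\MCDB(m,q,k)\subseteq\Multicyc_n$ gives a bijection onto its image; so once (a) identifies that image as $(\codewords_{m,q})^{q^{k-1}}$, part~(b) is immediate by counting.

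For the forward inclusion $\EBWT(\MCDB(m,q,k))\subseteq(\codewords_{m,q})^{q^{k-1}}$ I would use the standard correspondence between the sorted rows of $\TE(\sigma)$ and $k$-mer occurrences. Reading each row as the periodic sequence it represents, the rows are in lexicographic order and the last-column symbol of a row is the symbol cyclically preceding its starting position. Grouping the rows by their first $k-1$ columns partitions them into contiguous blocks indexed by $(k-1)$-mers $x$ in lexicographic order; the $x$-block has $\numoccur(\sigma,x)$ rows, which equals $mq$ since $x$ is the prefix of the $q$ $k$-mers $xc$ ($c\in\Omega$), each occurring $m$ times. Moreover, the number of rows in the $x$-block whose last column is $c$ equals $\numoccur(\sigma,cx)=m$. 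Hence the last column restricted to each block is a permutation of $0^m1^m\cdots(q-1)^m$, i.e.\ an element of $\codewords_{m,q}$, and reading the last column top to bottom exhibits $w=\EBWT(\sigma)$ as the concatenation, over the $q^{k-1}$ blocks, of elements of $\codewords_{m,q}$.

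The harder direction is the reverse inclusion, where I expect the real work to lie. Given $w\in(\codewords_{m,q})^{q^{k-1}}$, I must show $\sigma=\EBWT^{-1}(w)\in\MCDB(m,q,k)$, that is, that the \emph{fixed} length-$mq$ chunks of $w$ coincide with the $(k-1)$-mer prefix blocks of $\TIE(w)=\TE(\sigma)$. I would prove, by induction on $j=0,1,\ldots,k-1$, that grouping the rows by their first $j$ columns yields $q^j$ contiguous blocks, one per $j$-mer in lexicographic order, each of size exactly $mq^{k-j}$. The base case is trivial, and $j=1$ uses that each symbol occurs $mq^{k-1}$ times in $w$, so the first column (the sorted $w$) splits into $q$ equal blocks. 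For the inductive step I would invoke the order-preserving LF-mapping, in the standard-permutation form of Mantaci et al, which handles ties among identical rows: the block for a $j$-mer $x$ occupies a contiguous range of size $mq^{k-j}$ beginning at a multiple of $mq^{k-j}$, hence (as $k-j\ge1$) a union of full length-$mq$ chunks of $w$, so each symbol $c$ occurs $mq^{k-j-1}$ times in its last column; LF sends those rows bijectively and order-preservingly onto the prefix-$cx$ rows, making each $(j+1)$-mer block have size $mq^{k-j-1}$. Taking $j=k-1$ shows the $(k-1)$-mer blocks are precisely $w$'s length-$mq$ chunks, each lying in $\codewords_{m,q}$, so every $k$-mer $cx$ occurs exactly $m$ times and $\sigma\in\MCDB(m,q,k)$. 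The main obstacle is controlling this block refinement through the LF-mapping while correctly accounting for repeated rows arising from cycles of multiplicity greater than $1$ (as in the $(0011)(0011)$ example); the standard-permutation formulation is what makes that bookkeeping rigorous. I would finish by disposing of the degenerate cases $q=1$ and $k=1$ separately, where all the sets in~(a) collapse to singletons and the claim is checked directly.
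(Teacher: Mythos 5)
Your proposal is correct and follows essentially the same approach as the paper's proof: (b) is deduced from (a) via the bijectivity of $\EBWT$; the forward inclusion uses the partition of the rows of $\TE(\sigma)$ into $(k-1)$-mer blocks of $mq$ rows whose last column lies in $\codewords_{m,q}$ (your direct count of the rows ending in $c$ as occurrences of $cx$ is exactly what the paper establishes by its rotate-the-table-and-resort contradiction argument); and the reverse inclusion is the same induction on prefix length $j$, with blocks of size $mq^{k-j}$ aligned to the length-$mq$ chunks of $w$ and the rotate-and-sort (LF) step. One minor correction: only $q=1$ is truly degenerate --- for $k=1$ the sets in (a) do not collapse to singletons (e.g., $|\MCDB(2,2,1)|=W(2,2,1)=6$) --- but this is harmless, since your induction and final LF step go through verbatim when $k=1$, just as the paper's do, so no separate treatment is needed there.
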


\begin{proof}
When $q=1$, the theorem holds:
\begin{align*}
\MCDB(m,1,k)&=\{(0)^m\} \text{~has size 1}, \\
(\codewords_{m,q})^{q^{k-1}}=\codewords_{m,1}&=\{0^m\}
\phantom{()}\text{~has size $W(m,1,k)=1$}, \\
\EBWT((0)^m)&=0^m.
\end{align*}
Below, we prove (a) for $q\ge 2$.
Since $\EBWT$ is invertible,
(b) follows from
$|\MCDB(m,q,k)|=|\EBWT(\MCDB(m,q,k))|=|(\codewords_{m,q})^{q^{k-1}}|=W(m,q,k)$.

In both the forwards and inverse EBWT table, the rows starting with nonempty sequence $x$ (meaning $x$ is a prefix of a suitable power of the row) are consecutive (since the table is sorted), yielding a block $B_x$ of consecutive rows.
In the forwards direction,
in $\TE(\sigma)$,
the number of rows in $B_x$ equals
$\numoccur(\sigma,x)$,
since for each occurrence of $x$ in $\sigma$, a row is generated
by rotating the occurrence to the beginning
and taking a suitable power.

\smallskip

\noindent
\emph{Forwards:}
Given $\sigma\in\MCDB(m,q,k)$, we show that $\EBWT(\sigma)\in(\codewords_{m,q})^{q^{k-1}}$.

Every $k$-mer occurs exactly $m$ times in $\sigma$,
so
the rows of $\TE(\sigma)$ are partitioned
by their initial $k$-mer
 into
$q^k$ blocks of $m$ consecutive rows.
When $q>1$, the table has at least $k$ columns since $01^{k-1}$
is in $\sigma$ and generates a row of size at least $k$.

For each $(k-1)$-mer $x\in\Omega^{k-1}$,
block $B_x$ of $\TE(\sigma)$ has $mq$ rows,
because $\sigma$ has $m$ occurrences of
$k$-mer $xb$ for each of the $q$ choices of $b\in\Omega$.

Suppose by way of contradiction that at least $m+1$ rows of $B_x$ end in the same symbol $a\in\Omega$.
Upon cyclically rotating the table one column to the right
and sorting the rows (which leaves the table invariant),
at least $m+1$ rows begin with $k$-mer $ax$,
which contradicts that exactly $m$ rows start with
each $k$-mer.
Thus, at most $m$ rows of $B_x$
 end in the same symbol.
 Since $B_x$ has $mq$ rows and each of the $q$ symbols occurs
 at the end of at most $m$ of them, in fact, each symbol must occur
 at the end of exactly $m$ of them.
 Let $w_x$ be the word formed by the last character in the rows of $B_x$,
 from top to bottom. Then $w_x\in\codewords_{m,q}$.
 $\EBWT(\sigma)$ is the last column of the table, which is the
 concatenation of the $q^{k-1}$ words $w_x$ for $x\in\Omega^{k-1}$
 in order by $x$.

\smallskip

\noindent
\emph{Inverse:}
Given $w \in (\codewords_{m,q})^{q^{k-1}}$,
we show that $\EBWT^{-1}(w)\in\MCDB(m,q,k)$.

For $j\in\{1,\ldots,k\}$, we use induction to show that exactly
$mq^{k-j}$ rows of
 $\TIE(w)$
start with each
 $x\in\Omega^j$.
See
Examples~\ref{ex:EBWT_ex1}--\ref{ex:EBWT_ex2},
in which
$mq^{k-1}=2\cdot 2^{2-1}=4$ rows start with each of 0 and 1
and $mq^{k-2}=2\cdot 2^{2-2}=2$ rows start with each of 00, 01, 10, and 11.

\emph{Base case $j=1$:}
Each letter in $\Omega$
occurs exactly $mq^{k-1}$ times in $w$.
Upon cyclically shifting the last column, $w$, of the table
to the beginning
and sorting rows to obtain the first column,
this gives exactly $mq^{k-1}$ occurrences
of each letter in the first column. This proves the case $j=1$.

\emph{Induction step:} Given $j\in\{2,\ldots,k\}$, suppose the statement holds for case $j-1$.
For each $x\in\Omega^{j-1}$,
block $B_x$ consists of
$mq^{k-j+1}$ consecutive rows.
Split $B_x$ into into $q^{k-j}$ subblocks of $mq$ consecutive rows
(we may do this since $j\le k$).
The last column of each subblock
forms an interval in $w$ of length $mq$ taken from $\codewords_{m,q}$,
so each $a\in\Omega$ occurs at the end of exactly $m$ rows in
each subblock. Thus, in the whole table,
exactly $m \cdot q^{k-j}$ rows
start in $(k-1)$-mer $x$ and end
in character $a$.
On rotating the last column to the beginning,
exactly $mq^{k-j}$ rows start with $k$-mer $ax$.

Case
$j=k$ shows that every $k$-mer in $\Omega^k$
is a prefix of exactly $m$ rows of $\TIE(w)$.
Thus, every $k$-mer occurs exactly $m$ times in $\EBWT^{-1}(w)$,
so $\EBWT^{-1}(w)\in\MCDB(m,q,k)$.
\end{proof}

Theorem~\ref{thm:MCDB_bijection}(a) may be used to generate multicyclic de Bruijn sequences:
\begin{itemize}
\item
$\MCDB(m,q,k)=\{\EBWT^{-1}(w):w\in\codewords_{m,q}\}$;
see Table~\ref{tab:mcdb(2,2,2)}.
\item
To select a uniform random element of $\MCDB(m,q,k)$,
do $n=q^{k-1}$ rolls of a fair $|\codewords_{m,q}|=(mq)!/m!^q$-sided die to pick
$w_0,\ldots,w_{n-1}\in\codewords_{m,q}$.
Compute $\EBWT^{-1}(w_0\ldots w_{n-1})$.
\end{itemize}

\subsection{Original vs. Extended Burrows-Wheeler Transformation}
\label{sec:BWT_vs_EBWT}

To compute the original Burrows-Wheeler Transform $\BWT(s)$ of $s\in\Omega^n$ (see~\cite{BurrowsWheeler1994}):
form an $n\times n$ table, $\TB(s)$, whose rows are
$\rho^0(s),\ldots,\rho^{n-1}(s)$ sorted into lexicographic order.
Read
the last column from top to bottom to form a linear sequence, $\BWT(s)$.
This is similar to computing $\EBWT((s))$, 
but $s$ does not have to be primitive,
and the table is always square.

Given $w\in\Omega^n$, we construct the inverse BWT table $\TIB(w)$ by the same algorithm as for $\TIE(w)$, except we stop at $n$ columns rather than when the last column equals $w$.
Table $\TIB(w)$ is defined for all $w$,
but $\BWT^{-1}(w)$ may or may not exist. If $\TIB(w)$ consists of the $n$ rotations of its first row, sorted into order, then
one or more inverses $\BWT^{-1}(w)$ exist (take any row of the table as the inverse); otherwise, an inverse
$\BWT^{-1}(w)$
does not exist.

Both $s$ and its rotations have the same BWT table and the same transforms
(see~\cite[Prop.~1]{Mantaci2007}): $\TB(\rho^j(s))=\TB(s)$ and $\BWT(\rho^j(s))=\BWT(s)$
for $j=0,\ldots,|s|-1$.
Given $w=\BWT(s)$, the inverse may only be computed up to an unknown rotation, $\rho^j(s)$.
As an aside, we note the following, but we do not use it:
in order to recover a linear sequence $s$ from the inverse,
Burrows and Wheeler~\cite[Sec.~4.1]{BurrowsWheeler1994}
introduce a 
terminator character (which we denote `\$') that
does not occur in the input.
To encode, compute $w=\BWT(s\$)$.
To decode, compute $\BWT^{-1}(w)$,
select the rotation that puts `\$' at the end,
and delete `\$' to recover $s$.
Terminator characters are common in implementations of the BWT, but
since our application is cyclic sequences rather than linear sequences, we do not use a terminator.

If $s$ is primitive, then
$\TB(s)=\TE((s))$ and $\BWT(s)=\EBWT((s))$.
More generally,
any sequence $s$ can be decomposed as $s=t^d$ with $t$ primitive and $d\ge 0$, for which we have:

\begin{theorem}
Let $s=t^d$,
with $t$
 primitive and $d\ge 0$.
Then $\BWT(s)=\EBWT((t)^d)={a_1}^d {a_2}^d \cdots {a_r}^d$, where $r=|t|$ and $\BWT(t)=\EBWT((t))=a_1\ldots a_r$.
\label{thm:BWT(t^d)}
\end{theorem}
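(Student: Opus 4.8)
The plan is to prove the chain of equalities by showing that both $\BWT(t^d)$ and $\EBWT((t)^d)$ read off the same last column, namely $a_1^d a_2^d \cdots a_r^d$, from their respective sorted tables. First I would dispose of the degenerate cases: $d=0$ gives the empty sequence, the empty multiset, and the empty word on all three sides, while $d=1$ is immediate since $t$ is primitive, so $\TB(t)=\TE((t))$ and $\BWT(t)=\EBWT((t))=a_1\cdots a_r$. So assume $d\ge 2$.

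Next I would record the rotation identity $\rho^j(t^d)=(\rho^j(t))^d$, which follows by induction on $j$ from $\rho(t^d)=(\rho(t))^d$ (moving the final letter of $t^d$, which is the last letter of $t$, to the front). Since $t$ is primitive we have $\rho^r(t)=t$ and the rotations $\rho^0(t),\dots,\rho^{r-1}(t)$ are pairwise distinct; hence, as $j$ ranges over $0,\dots,dr-1$, the rows $\rho^j(t^d)$ of $\TB(t^d)$ are exactly the powers $(\rho^i(t))^d$ for $i=0,\dots,r-1$, each appearing with multiplicity $d$. On the $\EBWT$ side, $(t)^d$ is a genuine multicyclic sequence because $(t)$ is aperiodic, and here $c=\lcm(r,\dots,r)=r$, so each of the $d$ copies of $(t)$ contributes the rows $\linstr{\rho^j(t)}^{c/r}=\rho^j(t)$; thus the rows of $\TE((t)^d)$ are the rotations $\rho^i(t)$ themselves, again each with multiplicity $d$.

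The key step, and the one I expect to carry the real content, is a sorting lemma: for distinct rotations $u=\rho^i(t)$ and $v=\rho^{i'}(t)$, one has $u<v$ iff $u^d<v^d$. This holds because $u$ and $v$ have equal length $r$ and, being distinct by primitivity, first disagree at some position $p\le r$; then $u^d$ and $v^d$ agree on positions $1,\dots,p-1$ and differ at position $p$ with the same orientation, so they sort in the same order, while equal rotations yield equal powers. Consequently the lexicographic sort underlying $\TB(t^d)$ orders the underlying rotations exactly as the sort underlying $\TB(t)=\TE((t))$ does, with each distinct row blown up into $d$ consecutive identical copies; the same pattern holds for $\TE((t)^d)$, whose rows are literally the rotations.

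Finally I would read off the last columns. The last letter of $(\rho^i(t))^d$ equals the last letter of $\rho^i(t)$, so the $\ell$-th block of $d$ identical rows of $\TB(t^d)$ ends in the same letter $a_\ell$ as the $\ell$-th sorted row of $\TB(t)$; the same holds for $\TE((t)^d)$. Hence both final columns read $a_1^d a_2^d\cdots a_r^d$, giving $\BWT(t^d)=\EBWT((t)^d)=a_1^d a_2^d\cdots a_r^d$ as claimed. The only obstacle worth watching is the sorting lemma, i.e.\ making sure that replacing each root by its $d$-th power neither identifies two distinct rows nor reverses any pair; but primitivity of $t$, which forces distinct rotations to differ within their first $r$ letters, reduces this to the equal-length comparison above.
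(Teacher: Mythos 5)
Your proof is correct and takes essentially the same route as the paper: both identify the rows of $\TB(t^d)$ as the powers $(\rho^i(t))^d$ and the rows of $\TE((t)^d)$ as the rotations $\rho^i(t)$, each distinct row occurring $d$ times, observe that the two sorted tables have the same block structure ($r$ blocks of $d$ equal consecutive rows, in the same order), and read off the last column as ${a_1}^d\cdots{a_r}^d$. The only difference is one of emphasis: you state explicitly the sorting lemma that $u\mapsto u^d$ preserves lexicographic order on the (equal-length, distinct) rotations of $t$, which the paper leaves implicit when it asserts that $\TE(\sigma)$ equals the first $r$ columns of $\TB(s)$ and that $\TB(s)=[\,\TE(\sigma)\;\cdots\;\TE(\sigma)\,]$.
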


\begin{theorem}
\strut

(a)~Given $\EBWT(\sigma)=a_1 \ldots a_r$, then
$\EBWT(\sigma^d)= {a_1}^d \ldots {a_r}^d$.

(b)~Given $\EBWT^{-1}(a_1\ldots a_r)=\sigma$,
then $\EBWT^{-1}({a_1}^d \ldots {a_r}^d) = \sigma^d$.
\label{thm:EBWT(sigma^d)}
\end{theorem}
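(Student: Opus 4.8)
The plan is to prove part (a) directly from the structure of the EBWT table and then deduce part (b) from the fact, established by Mantaci et al, that $\EBWT$ is a bijection. Indeed (a) and (b) are equivalent: if $\EBWT^{-1}(a_1\ldots a_r)=\sigma$, then by bijectivity $\EBWT(\sigma)=a_1\ldots a_r$, so (a) gives $\EBWT(\sigma^d)=a_1^d\ldots a_r^d$, and inverting once more yields $\EBWT^{-1}(a_1^d\ldots a_r^d)=\sigma^d$, which is (b); the reverse implication is symmetric. So it suffices to establish (a).

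To prove (a), I would compare the tables $\TE(\sigma)$ and $\TE(\sigma^d)$. Write $\sigma=(s_1)\cdots(s_p)$ as a list of cycles (with repetition), so that $c=\lcm(|s_1|,\ldots,|s_p|)$ columns are used and the output length is $r=|\sigma|$, equal to the number of rows of $\TE(\sigma)$. The key observation is that forming $\sigma^d$ multiplies the multiplicity of every cycle by $d$ while leaving the cycles themselves, and hence all the lengths $|s_i|$, unchanged. Consequently the column count $c=\lcm(|s_1|,\ldots,|s_p|)$ is the same for both tables, while the multiset of rows of $\TE(\sigma^d)$ is exactly the multiset of rows of $\TE(\sigma)$ with every multiplicity scaled by $d$: each cycle of $\sigma$ contributes its rotation-powers $\linstr{\rho^j(s_i)}^{c/|s_i|}$ to $\TE(\sigma)$, and in $\sigma^d$ that very block of rows is generated $d$ times over.

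Since the EBWT table is obtained by sorting its rows lexicographically, equal rows occupy consecutive positions. Thus the row appearing at position $i$ in the sorted table $\TE(\sigma)$ becomes $d$ consecutive identical rows in $\TE(\sigma^d)$, and these duplicated blocks appear in the same relative order as the original rows. Reading the last column of $\TE(\sigma)$ from top to bottom gives $\EBWT(\sigma)=a_1\ldots a_r$, so reading the last column of $\TE(\sigma^d)$ produces each symbol $a_i$ repeated $d$ times consecutively, i.e.\ $a_1^d\ldots a_r^d$. This proves (a); specializing to $\sigma=(t)$ with $t$ primitive recovers the middle equality of Theorem~\ref{thm:BWT(t^d)}.

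The argument is essentially bookkeeping, so I expect no real obstacle; the one point that warrants explicit care is the claim that $\TE(\sigma^d)$ is precisely $\TE(\sigma)$ with each row duplicated $d$ times. Establishing this cleanly relies on two facts that I would state separately: first, that the column count $c$ depends only on the cycle lengths and so is unaffected by raising $\sigma$ to the $d$th power; and second, that row generation is local to each individual cycle, so that $\sigma^d$ simply emits $d$ copies of every row emitted by $\sigma$, with no interaction between distinct cycles. Once these are in hand, the conclusion about the last column is immediate.
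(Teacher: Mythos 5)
Your proposal is correct and follows essentially the same route as the paper: part (a) is proved by observing that $\TE(\sigma^d)$ is exactly $\TE(\sigma)$ with each row replicated $d$ times (you just spell out the supporting facts---unchanged column count and per-cycle row generation---that the paper leaves implicit), and part (b) is deduced by applying $\EBWT^{-1}$ to the identities in (a), exactly as the paper does.
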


In the following, ``$\BWT^{-1}(w)$ exists'' means that at least one inverse exists, not that a unique inverse exists.
\begin{theorem}
(a)~$\BWT^{-1}(w)$ exists iff $\EBWT^{-1}(w)$ has the form
$(t)^d$, where $t$ is primitive and $d\ge 0$ (specifically $d=|w|/|t|$).
In this case, $\BWT^{-1}(w)=t^d$ (or any rotation of it).

(b)~$s_1=\BWT^{-1}(a_1a_2\cdots a_r)$ exists
iff
$s_2=\BWT^{-1}({a_1}^d{a_2}^d\cdots{a_r}^d)$
exists. If these exist, then up to a rotation, $s_2={s_1}^d$.
\label{thm:BWT^(-1)_existence}
\end{theorem}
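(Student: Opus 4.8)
The plan is to derive both parts from the bijectivity of $\EBWT$ together with Theorems~\ref{thm:BWT(t^d)} and~\ref{thm:EBWT(sigma^d)}, which already relate $\BWT$ and $\EBWT$ on powers. The organizing principle is that $\BWT^{-1}(w)$ fails to exist exactly when the multiset $\EBWT^{-1}(w)$ is genuinely multicyclic (more than one distinct aperiodic cycle), and exists exactly when it collapses to a single repeated cycle $(t)^d$. Theorem~\ref{thm:BWT(t^d)} is the bridge that converts a $\BWT$ statement about a power $t^d$ into an $\EBWT$ statement about the repeated cycle $(t)^d$, and the bijectivity of $\EBWT$ then pins down $\EBWT^{-1}(w)$ uniquely.

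For part~(a) I would argue both directions directly. First assume $s=\BWT^{-1}(w)$ exists, so $\BWT(s)=w$ and $|s|=|w|$. Writing $s=t^d$ with $t$ primitive and $d=|s|/|t|$, Theorem~\ref{thm:BWT(t^d)} gives $w=\BWT(s)=\EBWT((t)^d)$, so injectivity of $\EBWT$ forces $\EBWT^{-1}(w)=(t)^d$, of the claimed form. Conversely, if $\EBWT^{-1}(w)=(t)^d$ with $t$ primitive, set $s=t^d$; then $|s|=|w|$ and Theorem~\ref{thm:BWT(t^d)} gives $\BWT(s)=\EBWT((t)^d)=w$, exhibiting $s$ (and, since $\BWT$ is rotation-invariant, any rotation of it) as an inverse. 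The identity $d=|w|/|t|$ and the description $\BWT^{-1}(w)=t^d$ fall directly out of these two computations.

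For part~(b), write $w_1=a_1\cdots a_r$ and $w_2={a_1}^d\cdots{a_r}^d$, and let $\sigma=\EBWT^{-1}(w_1)$. Theorem~\ref{thm:EBWT(sigma^d)}(b) gives $\EBWT^{-1}(w_2)=\sigma^d$. By part~(a), $s_1=\BWT^{-1}(w_1)$ exists iff $\sigma$ has the single-cycle form $(t)^e$, and $s_2=\BWT^{-1}(w_2)$ exists iff $\sigma^d$ has the single-cycle form $(u)^f$. Thus the desired equivalence reduces to the purely combinatorial claim that, for $d\ge 1$, $\sigma=(t)^e$ iff $\sigma^d=(u)^f$. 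The forward implication is immediate, since raising $(t)^e$ to the power $d$ yields $(t)^{ed}$.

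The reverse implication is the one point needing care, and is the \emph{main obstacle}: I must show that raising a multicyclic sequence to the $d$-th power cannot merge distinct cycle types, so that $\sigma^d$ being a single repeated cycle forces $\sigma$ to be one as well. This holds because $\sigma^d$ merely scales each multiplicity in $\sigma$ by $d$ while leaving the underlying set of distinct primitive roots unchanged; hence $\sigma$ and $\sigma^d$ have the same number of distinct cycles, and $\sigma^d=(u)^f$ forces $\sigma=(u)^e$ with $ed=f$. Finally, tracking the representatives produced by part~(a) gives $s_1=t^e$ and $s_2=t^{ed}$ up to rotation, so $s_2=s_1^d$ up to rotation, completing the argument.
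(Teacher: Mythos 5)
Your proof is correct and follows essentially the same route as the paper: part (a) via Theorem~\ref{thm:BWT(t^d)} together with bijectivity of $\EBWT$, and part (b) by setting $\sigma=\EBWT^{-1}(a_1\cdots a_r)$, applying Theorem~\ref{thm:EBWT(sigma^d)}(b), and invoking part (a). Your explicit justification of the reverse implication --- that $\sigma\mapsto\sigma^d$ only scales multiplicities and leaves the set of distinct cycles unchanged, so $\sigma^d$ being a single repeated cycle forces $\sigma$ to be one as well --- is precisely the step the paper's proof leaves implicit, and it is correct.
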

Theorem~\ref{thm:BWT^(-1)_existence}(b)
is essentially equivalent to~\cite[Prop.~2]{Mantaci2007}.

The above three theorems
are trivial when the input is null.
In the proofs below, we assume the input sequences have positive length.

\begin{proof}[Proof of Theorem~\ref{thm:BWT(t^d)}]
Let $n=|s|=rd$.
There are $|t|=r$ distinct rotations of $s$.
In the sequence $\rho^j(s)$ for $j=0,\ldots,n-1$, each distinct rotation is generated $d$ times.
These are sorted into order to form $\TB(s)$, giving
$r$ blocks of $d$ consecutive equal rows.
Therefore, the last column of the table has the form
${a_1}^d {a_2}^d \cdots {a_r}^d$.
The BWT construction ensures this is a permutation of the input 
$s=t^d$, so $a_1\ldots a_r$ is a permutation of $t$.

Let $\sigma=(t)^d$.
Since $t$ is primitive, $\TE(\sigma)$ consists
of each rotation of $t$ repeated on $d$ consecutive rows, sorted into order.
This is identical to the first $r$ columns of $\TB(s)$.
Conversely,
$\TB(s) = [ \TE(\sigma) \; \cdots \; \TE(\sigma) ]$ ($d$ copies of $\TE(\sigma)$ horizontally).
Thus, the last columns of $\TB(s)$ and $\TE(\sigma)$ are the same,
so $\BWT(s)=\EBWT(\sigma)$.
\end{proof}

\begin{proof}[Proof of Theorem~\ref{thm:EBWT(sigma^d)}]
(a) Replicate each row of $\TE(\sigma)$ on $d$ consecutive rows to
form $\TE(\sigma^d)$.
Given that the last column of $\TE(\sigma)$ is $\EBWT(\sigma)=a_1\ldots a_r$,
then the last column of $\TE(\sigma^d)$ is $\EBWT(\sigma^d)={a_1}^d \ldots {a_r}^d$.

(b) Applying $\EBWT^{-1}$ to both equations in (a) gives (b).
\end{proof}

\begin{proof}[Proof of Theorem~\ref{thm:BWT^(-1)_existence}]

(a)~Suppose $s=\BWT^{-1}(w)$ exists.
Decompose $s=t^d$ where $t$ is primitive and $d\ge 0$.
Then
$w=\BWT(t^d)$,
which, by Theorem~\ref{thm:BWT(t^d)},
equals $\EBWT((t)^d)$, so $\EBWT^{-1}(w)=(t)^d$.

Conversely, suppose $\EBWT^{-1}(w)=(t)^d$,
where $t$ is primitive.
By Theorem~\ref{thm:BWT(t^d)},
$\BWT(t^d)=\EBWT((t)^d)=w$,
so $\BWT^{-1}(w)=t^d$.

Note that $\BWT^{-1}(w)$
and linearizations of cycles of $\EBWT^{-1}(w)$
are only defined up to a rotation,
so other rotations of $s$ and $t$ may be used.

(b) Let $w=a_1\ldots a_r$ and $w'={a_1}^d \ldots {a_r}^d$.
Let $\sigma=\EBWT^{-1}(w)$. By Theorem~\ref{thm:EBWT(sigma^d)}, $\EBWT^{-1}(w') = \sigma^d$.
If $\sigma$ has the form $(t)^h$ for some primitive cycle $t$
and integer $h\ge 0$, then
$\sigma^d=(t)^{hd}$.
By part (a),
$\BWT^{-1}(w)=t^h$ and $\BWT^{-1}(w')=t^{hd}$
(or any rotations of these),
so $\BWT^{-1}(w')$ is the $d$\textsuperscript{th} power
of $\BWT^{-1}(w)$.
But if $\sigma$ doesn't have that form, then
$\BWT^{-1}(w)$ and $\BWT^{-1}(w')$ don't exist.
\end{proof}

Set
$
\nod = \{ w \in \Omega^+ : \text{$w$ does not have the form ${a_1}^i{a_2}^i\cdots$ for any $i>1$} \} .
$

\begin{theorem}
Let $(s)\in\Cyc^{(d)}(m,q,k)$, with $m,q,k\ge 1$.
Then $\BWT(s)$ has the form ${a_1}^d\ldots{a_r}^d$
where $a_1\ldots a_r\in (\codewords_{m/d,q})^{q^{k-1}}\cap\nod$.
\end{theorem}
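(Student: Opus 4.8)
The plan is to reduce the statement about $\BWT$ to facts about the $\EBWT$ that are already established, and then to dispatch the aperiodicity constraint $\cap\,\nod$ by a separate multiplicity-counting argument.

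First I would exploit the rotation order. Since $(s)\in\Cyc^{(d)}(m,q,k)$ has order exactly $d$, its root $t$ is primitive and $s=t^d$, with $|t|=(m/d)q^k$. Applying Theorem~\ref{thm:BWT(t^d)} directly to $s=t^d$ gives $\BWT(s)=\EBWT((t)^d)={a_1}^d\cdots{a_r}^d$, where $r=|t|$ and $a_1\ldots a_r=\BWT(t)=\EBWT((t))$. This already produces the required outer form ${a_1}^d\cdots{a_r}^d$, so the remaining work is entirely about locating the deflated word $a_1\ldots a_r$ inside $(\codewords_{m/d,q})^{q^{k-1}}\cap\nod$.

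Next I would identify the cycle $(t)$. By Lemma~\ref{lem:m/d}(c), the map $s=t^d\mapsto t$ sends $(s)$ to $(t)\in\Cyc^{(1)}(m/d,q,k)$: passing to the root reduces each $k$-mer multiplicity from $m$ to $m/d$ and forces order $1$, i.e.\ aperiodicity. Viewed as a multiset consisting of a single aperiodic cycle, $(t)$ is therefore an element of $\MCDB(m/d,q,k)$, whose $k$-mer counts (with wraparound) agree with the $\numoccur$ convention. The forward direction of Theorem~\ref{thm:MCDB_bijection}(a) then gives $\EBWT((t))=a_1\ldots a_r\in(\codewords_{m/d,q})^{q^{k-1}}$; note the lengths match, since $q^{k-1}\cdot(m/d)q=(m/d)q^k=r$. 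This settles the first membership.

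The main obstacle is showing $a_1\ldots a_r\in\nod$, i.e.\ that it is not an inflation ${c_1}^i\cdots{c_{r/i}}^i$ for any $i>1$. I would argue by contradiction: if $a_1\ldots a_r={c_1}^i\cdots{c_{r/i}}^i$ with $i>1$, then Theorem~\ref{thm:EBWT(sigma^d)}(b) gives $\EBWT^{-1}(a_1\ldots a_r)=\tau^i$, where $\tau=\EBWT^{-1}(c_1\ldots c_{r/i})$. But $\EBWT$ is a bijection and $\EBWT((t))=a_1\ldots a_r$, so $\EBWT^{-1}(a_1\ldots a_r)=(t)$, a single aperiodic cycle of total cycle-multiplicity $1$. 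Since $c_1\ldots c_{r/i}$ is nonempty, $\tau$ is a nonempty multicyclic sequence, so $\tau^i$ has total cycle-multiplicity at least $i>1$ (multiplying each multiplicity by $i$), contradicting total multiplicity $1$. Hence no such $i$ exists and $a_1\ldots a_r\in\nod$. The degenerate case $q=1$ (where only $d=m$ occurs, $t=0$) is subsumed by the same chain of citations, since $\MCDB(1,1,k)=\{(0)\}$ and $\EBWT((0))=0\in\codewords_{1,1}\cap\nod$. The one point requiring care throughout is keeping two distinct notions of ``multiplicity'' apart: the $k$-mer multiplicity $m/d$, which governs membership in $(\codewords_{m/d,q})^{q^{k-1}}$, versus the cycle-multiplicity in a multicyclic sequence, which is precisely what drives the $\nod$ argument.
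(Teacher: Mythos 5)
Your proposal is correct and takes essentially the same route as the paper's own proof: decompose $s=t^d$ with $t$ primitive, place $(t)$ in $\Cyc^{(1)}(m/d,q,k)$ via Lemma~\ref{lem:m/d}, obtain the form ${a_1}^d\cdots{a_r}^d$ and the membership in $(\codewords_{m/d,q})^{q^{k-1}}$ from Theorems~\ref{thm:BWT(t^d)} and~\ref{thm:MCDB_bijection}(a), and rule out a power structure by contradiction using Theorem~\ref{thm:EBWT(sigma^d)}(b). Your explicit count of total cycle-multiplicity in $\tau^i$ merely spells out what the paper phrases as ``has at least $i$ cycles,'' so the two arguments coincide in substance.
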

\begin{proof}
Since $s$ has order $d$, it has the form $s=t^d$ with $t$ primitive.
By Lemma~\ref{lem:m/d},  $t\in\Cyc^{(1)}(m/d,q,k)$.
Since $t$ is primitive, $\BWT(t)=\EBWT((t))$; say this is $u=a_1\ldots a_r$.
By Theorem~\ref{thm:MCDB_bijection}(a),
$u\in(\codewords_{m/d,q})^{q^{k-1}}$,
and by Theorem~\ref{thm:BWT(t^d)},
$\BWT(s)={a_1}^d\ldots {a_r}^d$

Now suppose by way of contradiction that
$u = {b_1}^i \ldots {b_{r/i}}^i$
with $i|r$ and $i>1$.
By Theorem~\ref{thm:EBWT(sigma^d)}(b),
$\EBWT^{-1}({b_1}^i \ldots {b_{r/i}}^i)$
has at
least $i$ cycles;
but $\EBWT^{-1}(u)=(t)$
has exactly one cycle,
 a contradiction.
Thus,
$u\in\nod$.
\end{proof}

\begin{example}
\label{ex:bwt(cyc(2,2,2))}
Cyclic multi de Bruijn sequences in $\Cyc(2,2,2)$
correspond
to entries marked `*' in Table~\ref{tab:mcdb(2,2,2)}.
Spaces highlight the structure from
Theorem~\ref{thm:MCDB_bijection}(a) but are not part of the sequences.
$$\arraycolsep=2pt\begin{array}{lll}
\BWT(00010111)&=\EBWT((00010111))&=1001\;1010
\\
\BWT(00011011)&=\EBWT((00011011))&=1010\;1100
\\
\BWT(00011101)&=\EBWT((00011101))&=1010\;0110
\\
\BWT(00100111)&=\EBWT((00100111))&=1100\;1010
\\
\BWT(00110011)&=\EBWT((0011)(0011))&=1100\;1100
= 1^20^21^20^2
\;.
\end{array}
$$
By Theorem~\ref{thm:BWT(t^d)},
$\BWT(00110011)$ is derived from
this example with $m=1$:
$$
\BWT(0011)=\EBWT((0011))=10 \; 10
\;.
$$
\end{example}

\section{Partitioning a graph into aperiodic cycles}
\label{sec:MCDB_graph_cycles}

We give an alternate proof of Theorem~\ref{thm:MCDB_bijection}(b) based on the graph $G(m,q,k)$ rather than the Extended Burrows-Wheeler Transform.

Let $G$ be a multigraph with vertices $V(G)$ and edges $E(G)$.

Denote the sets of incoming and outgoing edges at vertex $x$ by
$\In_G(x)=\{e\in E(G): \head(e)=x\}$
and
$\Out_G(x)=\{e\in E(G): \tail(e)=x\}$.

The \emph{period} of
a cycle on edges 
 $(e_1,\ldots,e_n)$
is the smallest $p\in\{1,\ldots,n\}$ for which $e_i=e_{i+p}$
(subscripts taken mod $n$) for $i=1,\ldots,n$.
A cycle is \emph{aperiodic} when its period is its length.

Let $\vec \nu = \langle \nu_e : e\in E(G)\rangle$
be an assignment of nonnegative integers
to edges of $G$.
Let $\Multicyc_G(\vec \nu)$ be the collection of multisets of
aperiodic cycles in $G$ in which each edge
$e$ is used exactly $\nu_e$ times in the multiset.
Different representations of a cycle, such as $(e_1,\ldots,e_n)$
vs. $(e_{i+1},\ldots,e_n,e_1,\ldots,e_i)$, are
 considered equivalent.
 In this section,
we will
determine $|\Multicyc_G(\vec\nu)|$.
Set
\begin{align*}
\indeg_G(x,\nu)&=\sum_{e\in\In_G(x)} \nu_e
&
\outdeg_G(x,\nu)&=\sum_{e\in\Out_G(x)} \nu_e
\;.
\end{align*}
A necessary condition for such a multiset to exist is that
at each vertex $x\in V(G)$,
$\indeg_G(x,\nu)=\outdeg_G(x,\nu)$,
because the number of times vertex $x$ is entered (resp., exited)
among the cycles is given by
$\indeg_G(x,\nu)$ (resp., $\outdeg_G(x,\nu)$).
 Below, we assume this holds.
 
\begin{figure}
\includegraphics[width=.75\textwidth]{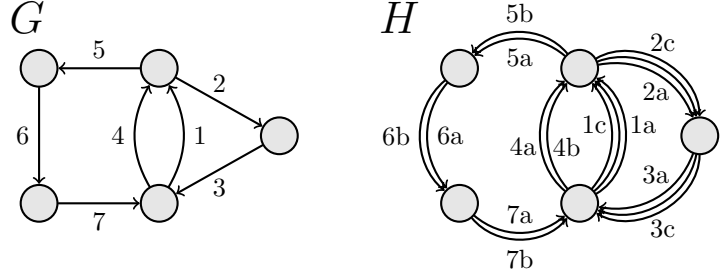}
\caption{Given directed multigraph $G$ and
edge multiplicities $\nu_1=\nu_2=\nu_3=3$, $\nu_4=\nu_5=\nu_6=\nu_7=2$,
replace each edge $e$ of $G$ by $\nu_e$ edges to form $H$.
For example, edge $2$ in $G$ yields edges $2a,2b,2c$ in $H$.}
\label{fig:G->H}
\end{figure}

Form a directed multigraph
$H$
 with the same vertices as $G$,
and with each edge $e$ of $G$ replaced by $\nu_e$ distinguishable
 edges on the same vertices as $e$.
See Fig.~\ref{fig:G->H}.
Since $G$ is a multigraph, if there are $r$ edges $e_1,\ldots,e_r$ from
$v$ to $w$ in $G$, there will be $\sum_{i=1}^r \nu_{e_i}$ edges from $v$ to $w$ in $H$.
By construction, at every vertex $x$,
$\indeg_H(x)=\indeg_G(x,\nu)$ and $\outdeg_H(x)=\outdeg_G(x,\nu)$.
Combining this
with $\indeg_G(x,\nu)=\outdeg_G(x,\nu)$
gives that $H$ is balanced
(at each vertex, the sum of the indegrees equals the sum of the outdegrees).
Note that we do not require $G$ or $H$ to be connected.

Define a map $\pi:H\to G$ that preserves vertices and maps the $\nu_e$ edges of $H$ corresponding to $e$ back to $e$ in $G$.
Extend $\pi$ to map multisets of cycles in $H$ to multisets of cycles in $G$
as follows:
\begin{multline*}
\pi\bigl((e_{1,1},\ldots,e_{1,n_1})
\cdots (e_{r,1},\ldots,e_{r,n_r})\bigr)
\\
= \bigl(\pi(e_{1,1}),\ldots,\pi(e_{1,n_1})\bigr)
\cdots \bigl(\pi(e_{r,1}),\ldots,\pi(e_{r,n_r})\bigr) \;.
\end{multline*}

A \emph{cycle partition} of $H$ is a set of cycles in $H$,
with each edge of $H$ used
once
over the set.
Let $\CP(H)$ denote the set of all cycle partitions of $H$.

An \emph{edge successor map} of $H$ is a function $f:E(H)\to E(H)$ such that for every edge $e\in E(H)$, $\head(e)=\tail(f(e))$, and
for every vertex $x\in V(H)$, $f$ restricts to a bijection
$f:\In_H(x)\to\Out_H(x)$.
Let $F(H)$ be the set of all edge successor maps of $H$.

\begin{theorem}
There is a bijection between $\CP(H)$ and $F(H)$.
\label{thm:cyc_partition_edge_successor}
\end{theorem}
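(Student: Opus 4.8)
The plan is to recognize that an edge successor map $f\in F(H)$ is nothing more than a permutation of the edge set $E(H)$ that is compatible with the head/tail incidence structure, and that a cycle partition of $H$ is exactly the decomposition of $E(H)$ into the graph cycles traced out by such a permutation. First I would observe that every $f\in F(H)$ is in fact a bijection $E(H)\to E(H)$: the domain is partitioned by head vertex as $E(H)=\bigsqcup_{x\in V(H)}\In_H(x)$ and the codomain is partitioned by tail vertex as $E(H)=\bigsqcup_{x\in V(H)}\Out_H(x)$, and by definition $f$ carries each block $\In_H(x)$ bijectively onto the block $\Out_H(x)$. Gluing these local bijections across all vertices yields a single global permutation of $E(H)$. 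Since the edges of $H$ are distinguishable, a cycle partition is literally a partition of $E(H)$ into edge-disjoint graph cycles, with no multiset subtlety; this is the clean structure the two directions will match up.

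\emph{Forward map} $\Phi:F(H)\to\CP(H)$. Given the permutation $f$, decompose it into disjoint permutation cycles. For a permutation cycle $(e_1,\ldots,e_n)$ with $f(e_i)=e_{i+1}$ (indices mod $n$), the defining condition $\head(e_i)=\tail(f(e_i))=\tail(e_{i+1})$ shows that $e_1,\ldots,e_n$ form a closed walk, i.e.\ a cycle in $H$ (a fixed point $f(e)=e$ forces $\head(e)=\tail(e)$, a loop, giving a length-$1$ cycle, which is permitted). Because the permutation cycles partition $E(H)$, the resulting graph cycles use each edge exactly once, so their collection is a cycle partition; set $\Phi(f)$ equal to it.

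\emph{Reverse map} $\Psi:\CP(H)\to F(H)$. Given a cycle partition $P$, each edge $e$ lies in a unique cycle of $P$ and has a unique successor there; define $f(e)$ to be that successor. Head/tail compatibility $\head(e)=\tail(f(e))$ is immediate since consecutive edges of a cycle share a vertex. To see that $f$ restricts to a bijection $\In_H(x)\to\Out_H(x)$: the successor of an edge with head $x$ is an edge with tail $x$, so $f$ maps $\In_H(x)$ into $\Out_H(x)$; it is injective because in a cycle partition each edge has a unique predecessor; and it is surjective because every $g\in\Out_H(x)$, lying in some cycle of $P$, has a predecessor $e$ with $\head(e)=\tail(g)=x$, whence $e\in\In_H(x)$ and $f(e)=g$. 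Thus $\Psi(P)=f\in F(H)$.

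Finally I would verify that $\Phi$ and $\Psi$ are mutually inverse, which is essentially the statement that reading off successors from the cycles of $\Phi(f)$ recovers $f$, while decomposing the permutation $\Psi(P)$ into its cycles recovers the cycles of $P$. The step that most requires care is the local bijectivity in the reverse direction: it rests on the existence and uniqueness of a predecessor for each edge within a cycle partition, consistent with $H$ being balanced so that both $\CP(H)$ and $F(H)$ are the appropriate (possibly empty) sets. I would also note that $\Psi$ must be checked to respect the convention that rotations and reorderings of the cycles in a partition are identified, so that $\Psi(P)$ depends only on $P$ and not on any chosen representation; this is where I expect the only genuine subtlety to lie, the rest being routine bookkeeping.
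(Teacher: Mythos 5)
Your proposal is correct and follows essentially the same route as the paper: identify an edge successor map with a global permutation of $E(H)$, decompose it into permutation cycles which the head/tail condition turns into graph cycles, and conversely read off successors within each cycle of a partition, checking that the two constructions are mutually inverse. Your extra remarks (the gluing of local bijections into one permutation, and well-definedness of the successor map under rotation of cycle representatives) are sound elaborations of steps the paper treats as immediate.
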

Denote the edge successor map corresponding to cycle partition $C$ by $f_C$, and the cycle partition corresponding to edge successor map $f$ by $C_f$.

\begin{proof}
Let $C\in \CP(H)$.
Construct
$f_C\in F(H)$
 as follows:
for each edge $e\in E(H)$, set $f_C(e)=e'$ where $e'$ is
the unique edge following $e$ in its cycle in $C$;
note that $\head(e)=\tail(e')$, as required.
Every edge appears exactly once in $C$
and
has exactly one image and one inverse in $f_C$.
Thus, at each vertex $x$, $f_C$ restricts to a bijection
$f_C:\In_H(x)\to\Out_H(x)$,
so $f_C\in F(H)$.

Conversely, given $f\in F(H)$, construct a cycle partition $C_f$ as follows.
$f$ is a permutation of the finite set $E(H)$.
Express this permutation in cycle form, $C_f$.
Each permutation cycle has the form $(e_1,e_2,\ldots,e_n)$
with $f(e_i)=e_{i+1}$ (subscripts taken mod $n$).
Since $\head(e_i)=\tail(f(e_i))=\tail(e_{i+1})$, these
permutation cycles are also graph cycles,
so $C_f\in\CP(H)$.

By construction, the maps $f\mapsto C_f$ and $C\mapsto f_C$ are inverses.
\end{proof}

\begin{corollary}
Let $H$ be a finite balanced directed multigraph, with all edges distinguishable. The number of cycle partitions of $H$ is
\begin{equation}
|\CP(H)|=
\prod_{x\in V(H)} (\outdeg(x))!
\label{eq:num_cycle_partitions}
\end{equation}
\label{cor:num_cycle_partitions}
\end{corollary}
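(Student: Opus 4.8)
The plan is to invoke the bijection $\CP(H)\leftrightarrow F(H)$ established in Theorem~\ref{thm:cyc_partition_edge_successor}, which immediately reduces the problem to counting edge successor maps. So it suffices to prove that $|F(H)|=\prod_{x\in V(H)}(\outdeg(x))!$, and the corollary then follows by combining this count with that bijection.

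First I would observe that an edge successor map $f$ decomposes into independent local choices, one at each vertex. Indeed, every edge $e$ has a unique head, so it lies in exactly one incoming-edge set, namely $\In_H(\head(e))$; and the defining condition on $f$ says precisely that its restriction $f|_{\In_H(x)}$ is a bijection onto $\Out_H(x)$ for each vertex $x$. Conversely, given any family of bijections $g_x\colon\In_H(x)\to\Out_H(x)$ indexed by $x\in V(H)$, the rule $f(e)=g_{\head(e)}(e)$ is well defined (since $\head(e)$ is unique) and satisfies both defining conditions of an edge successor map: $\tail(f(e))=\head(e)$ by construction, and its restriction to each $\In_H(x)$ recovers $g_x$. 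Thus $F(H)$ is in bijection with the product set $\prod_{x\in V(H)}\mathrm{Bij}(\In_H(x),\Out_H(x))$, where $\mathrm{Bij}(A,B)$ denotes the set of bijections from $A$ to $B$.

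Next I would count the local factors. Because $H$ is balanced, at each vertex $x$ we have $|\In_H(x)|=\indeg(x)=\outdeg(x)=|\Out_H(x)|$, so bijections $\In_H(x)\to\Out_H(x)$ exist and there are exactly $(\outdeg(x))!$ of them. Since the choices at distinct vertices are independent, $|F(H)|=\prod_{x\in V(H)}(\outdeg(x))!$, and combining this with Theorem~\ref{thm:cyc_partition_edge_successor} yields the claimed formula for $|\CP(H)|$.

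There is essentially no serious obstacle here: the content lies entirely in recognizing that the single global constraint defining an edge successor map factors into independent per-vertex constraints, which is exactly what the phrase ``$f$ restricts to a bijection $\In_H(x)\to\Out_H(x)$'' encodes. The only points requiring a moment's care are that each edge belongs to a single incoming set (so that the local choices do not interact), and that the balanced hypothesis is precisely what makes the two sets being matched equinumerous, so that each local factor is genuinely $(\outdeg(x))!$ rather than $0$.
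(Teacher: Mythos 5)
Your proof is correct and follows exactly the paper's route: count $|F(H)|$ by decomposing an edge successor map into independent bijections $\In_H(x)\to\Out_H(x)$ at each vertex (with the balanced hypothesis giving $(\outdeg(x))!$ choices per vertex), then transfer the count to $\CP(H)$ via Theorem~\ref{thm:cyc_partition_edge_successor}. You merely spell out in more detail the per-vertex factorization that the paper's one-line proof takes as evident.
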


\begin{proof}
At each $x\in V(H)$, there are $(\outdeg(x))!$
bijections
from $\In(x)$ to $\Out(x)$,
so $|F(H)|$
is given by~(\ref{eq:num_cycle_partitions}).
By
Theorem~\ref{thm:cyc_partition_edge_successor},
the number of cycle partitions of $H$ is also given
by~(\ref{eq:num_cycle_partitions}).
\end{proof}

Cycles in $\pi(C_f)$ are not necessarily aperiodic, but
may be split into aperiodic cycles as follows.
Replace each cycle $D=(e_1,\ldots,e_n)$ of $\pi(C_f)$
by $n/p$ cycles $(e_1,\ldots,e_{n/p})$, where $p$ is the period of $D$.
This is well-defined; had we represented $D$
by a different rotation, the result would be equivalent,
but represented by a different rotation.
Let $\Split(\pi(C_f))$ denote splitting all cycles of $\pi(C_f)$ in this fashion;
this is a multiset of aperiodic cycles in $G$.

\begin{theorem}
If $\indeg_G(x,\nu)=\outdeg_G(x,\nu)$ at every
 $x\in V(G)$, then
\begin{equation}
|\Multicyc_G(\vec\nu)| =
\frac{\prod_{x\in V(G)} (\outdeg_G(x,\nu))!}{\prod_{e\in E(G)} \nu_e!}
.
\label{eq:|M_G(nu)|}
\end{equation}
\end{theorem}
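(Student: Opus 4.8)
The plan is to count $|\Multicyc_G(\vec\nu)|$ through the composite map $\Psi=\Split\circ\pi\colon\CP(H)\to\Multicyc_G(\vec\nu)$, $C\mapsto\Split(\pi(C))$, and to show that every fiber of $\Psi$ has \emph{exactly} $\prod_{e\in E(G)}\nu_e!$ elements. Since $\outdeg_H(x)=\outdeg_G(x,\nu)$, Corollary~\ref{cor:num_cycle_partitions} gives $|\CP(H)|=\prod_{x\in V(H)}(\outdeg(x))!=\prod_{x\in V(G)}(\outdeg_G(x,\nu))!$, and the uniform fiber count then yields $|\Multicyc_G(\vec\nu)|=|\CP(H)|/\prod_e\nu_e!$, which is~(\ref{eq:|M_G(nu)|}). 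First I would record that $\Psi$ is well defined and surjective: $\Split(\pi(C))$ is a multiset of aperiodic cycles of $G$ using each edge $e$ exactly $\nu_e$ times, hence lies in $\Multicyc_G(\vec\nu)$; and any $M\in\Multicyc_G(\vec\nu)$ is realized by lifting each edge-occurrence of $M$ to a distinct copy in $H$, producing a $C$ with $\pi(C)=M$ and $\Split(M)=M$.

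The substance is the fiber count. Fix $M\in\Multicyc_G(\vec\nu)$ and stratify $\Psi^{-1}(M)$ by the intermediate multiset $P=\pi(C)$ of (possibly periodic) cycles of $G$; since $P=\pi(C)$ is determined by $C$, this is automatically a partition of the fiber. As $\Split(P)=M$, the admissible $P$ are exactly those obtained from $M$ by choosing, for each distinct aperiodic cycle $R$ occurring with multiplicity $\mu_R$ in $M$, an integer partition of $\mu_R$ and merging the corresponding copies of $R$ into powers $R^a$. For a fixed such $P$, I would count the $C$ with $\pi(C)=P$ by substituting the copies of $H$-edges into the edge-slots of $P$: there are $\prod_e\nu_e!$ substitutions, and two of them yield the same cycle partition exactly when they differ by a symmetry of $P$, namely a rotation of one of its cycles or a permutation of identical cycles. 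Because the copies filling the slots are pairwise distinct, this symmetry group $\mathrm{Aut}(P)$ acts \emph{freely} on the substitutions, so $|\{C:\pi(C)=P\}|=\prod_e\nu_e!\,/\,|\mathrm{Aut}(P)|$.

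Finally I would sum over $P$. The group $\mathrm{Aut}(P)$ factors over the distinct cycles $R$ of $M$, and the factor attached to $R$ with chosen partition $\lambda\vdash\mu_R$ has order $z_\lambda=\prod_i i^{m_i}m_i!$, where $\lambda$ has $m_i$ parts equal to $i$: here $i^{m_i}$ records the order-$i$ rotational symmetry of each merged cycle $R^i$, and $m_i!$ records permutations among the $m_i$ identical cycles $R^i$. Hence $\sum_{P:\Split(P)=M}1/|\mathrm{Aut}(P)|$ factors as a product, over the distinct cycles $R$ of $M$, of $\sum_{\lambda\vdash\mu_R}1/z_\lambda$. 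The crux is the classical identity $\sum_{\lambda\vdash n}1/z_\lambda=1$ (equivalently, counting permutations of an $n$-set by cycle type gives $\sum_\lambda n!/z_\lambda=n!$), so each inner sum is $1$ and $|\Psi^{-1}(M)|=\prod_e\nu_e!\cdot\prod_R 1=\prod_e\nu_e!$, independent of $M$ as required.

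I expect the main obstacle to be precisely that $\Psi^{-1}(M)$ is \emph{not} a single orbit of the copy-relabeling group $\prod_e\mathrm{Sym}(\{\text{copies of }e\})$: that action need not be free, since a cycle partition whose projection is periodic can be fixed by a nontrivial relabeling, so the tempting ``divide by $\prod_e\nu_e!$ because copies are interchangeable'' slogan is not literally an orbit count. The resolution is to pass through the intermediate patterns $P$ and let the cycle-index identity $\sum_\lambda 1/z_\lambda=1$ simultaneously absorb the rotational symmetries of merged cycles and the permutations of repeated cycles. The remaining care is in verifying the two supporting claims used above: that $\mathrm{Aut}(P)$ acts freely on substitutions (which relies on distinct copies occupying distinct slots), and that two substitutions into $P$ give equal cycle partitions of $H$ if and only if they differ by an element of $\mathrm{Aut}(P)$.
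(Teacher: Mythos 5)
Your proof is correct, but it is not the paper's proof --- and the difference matters, because the paper's own argument contains a step that is false as stated, and your stratification is exactly what repairs it. The paper argues via edge successor maps: it defines $f\equiv f'$ on $F(H)$ iff $\pi(f(e))=\pi(f'(e))$ for every $e\in E(H)$, correctly shows each equivalence class has size $\prod_{e\in E(G)}\nu_e!$, and then asserts that $\Split(\pi(C_f))$ is constant on equivalence classes. That assertion fails in general. Take $G$ to be a single vertex with two loops $1,2$ and $\nu_1=3$, $\nu_2=2$, so $E(H)=\{1a,1b,1c,2a,2b\}$, and compare the successor map $f$ with cycles $(1a,2a,1b)(1c,2b)$ to the successor map $f'$ with the single cycle $(1a,2a,1b,1c,2b)$. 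They differ only at the arguments $1b$ and $2b$, where the two values $1a$ and $1c$ are exchanged; since $\pi(1a)=\pi(1c)=1$, this gives $f\equiv f'$. Yet $\Split(\pi(C_f))=(1,2,1)(1,2)$ while $\Split(\pi(C_{f'}))=(1,2,1,1,2)$, and since all three projected cycles here are aperiodic, these multisets are distinct. (The same phenomenon occurs inside the paper's intended application, e.g.\ $G=G(1,2,1)$ with all $\nu_e=m=3$; the paper's own illustrative example avoids it only because the two cycles being merged there project to the \emph{same} cycle $(1,2,3)$ of $G$.) So the paper's equivalence classes and the fibers of your map $\Psi=\Split\circ\pi$ are genuinely different partitions of $F(H)\cong\CP(H)$; both turn out to have all blocks of size $\prod_e\nu_e!$, but only the fiber statement yields the theorem, and it is precisely the statement that needs proof.

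Your argument supplies that proof, and the supporting claims you defer do check out. The stratification of $\Psi^{-1}(M)$ by the intermediate pattern $P=\pi(C)$ is legitimate; for every admissible $P$ the substitutions of the $\nu_e$ distinguishable copies into the $\nu_e$ slots labelled $e$ number $\prod_e\nu_e!$; the action of $\mathrm{Aut}(P)$ on substitutions is free because a substitution is injective on slots (each $H$-edge is used exactly once), and two substitutions yield the same cycle partition of $H$ iff they differ by an element of $\mathrm{Aut}(P)$, because the cycles of a cycle partition of $H$ have pairwise distinct entries, so the matching between cycles and the aligning rotations are unique and automatically label-preserving. With $|\mathrm{Aut}(P)|=\prod_R z_{\lambda^{(R)}}$ (where $\lambda^{(R)}\vdash\mu_R$ is the merging pattern that $P$ induces on the copies of $R$) and the identity $\sum_{\lambda\vdash n}1/z_\lambda=1$, every fiber of $\Psi$ has size $\prod_e\nu_e!$, and dividing this into $|\CP(H)|=\prod_{x\in V(G)}(\outdeg_G(x,\nu))!$ gives the formula. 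In short, your route is not merely an alternative: some argument of this kind --- summing over intermediate patterns, with the partition identity absorbing both the rotational symmetry of merged cycles and the permutations of repeated cycles --- appears to be necessary, and the paper's shortcut of dividing out its equivalence classes does not by itself constitute a proof, even though its class-size computation and the resulting formula are correct.
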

\begin{proof}
By Corollary~\ref{cor:num_cycle_partitions}, the number of cycle partitions of $H$ is
\begin{equation}
\prod_{x\in V(H)} (\outdeg_H(x))!
=
\prod_{x\in V(G)} (\outdeg_G(x,\nu))!
\label{eq:num_cyc_partitions_Gnu}
\end{equation}
In $F(H)$, define $f \equiv f\,'$ iff
every edge $e\in V(H)$ satisfies $\pi(f(e))=\pi(f'(e))$.
Although $C_f$ and $C_{f'}$ may have different cycle structures,
on splitting the cycles,
$\Split(\pi(C_f))=\Split(\pi(C_{f'}))$.
The size of each equivalence class is
$\prod_{e\in E(G)} \nu_e!$.
Dividing~(\ref{eq:num_cyc_partitions_Gnu}) by this gives~(\ref{eq:|M_G(nu)|}).
\end{proof}

\begin{example}
In Fig.~\ref{fig:G->H}, these
have different cycle structures:
\begin{align*}
C_f &= (4a,5a,6a,7a)(1a,2a,3a,1b,2b,3b)(1c,5b,6b,7b,4b,2c,3c) \\
C_{f'} &= (4a,5a,6a,7a)(1a,2a,3a)(1b,2b,3b)(1c,5b,6b,7b,4b,2c,3c)
\end{align*}
However, they are in the same equivalence class.
For all edges except $3a$ and $3b$, we have $f(e)=f'(e)$, so $\pi(f(e))=\pi(f'(e))$. For edges $3a$ and $3b$, 
\begin{align*}
f(3a)&=1b & f'(3a)&=1a & \pi(f(3a))&=\pi(f'(3a))=1 \\
f(3b)&=1a & f'(3b)&=1b & \pi(f(3b))&=\pi(f'(3b))=1
\end{align*}
so $f\equiv f'$. 
Applying $\pi$ and splitting the cycles gives
$$\Split(\pi(C_f))=\Split(\pi(C_{f'}))=
(4,5,6,7)(1,2,3)(1,2,3)(1,5,6,7,4,2,3).$$

More generally
in Fig.~\ref{fig:G->H}, let
$\nu_1=\nu_2=\nu_3=A$ and $\nu_4=\nu_5=\nu_6=\nu_7=B$.
Fig.~\ref{fig:G->H} shows $H$
 for
 $A=3$ and $B=2$.
The values of $\outdeg_G(x,\nu)$ clockwise from the
rightmost vertex are
$A,A+B,B,B,A+B$. We obtain
\begin{align*}
|\Multicyc_G(\vec\nu)| &=
\frac{A! (A+B)!^2 B!^2}{\prod_{i=1}^7 \nu_i!}
=\frac{A! (A+B)!^2 B!^2}{A!^3 B!^4}
=\frac{(A+B)!^2}{A!^2 B!^2}
=\binom{A+B}{A}^2 .
\end{align*}
\end{example}

Finally, we apply~(\ref{eq:|M_G(nu)|}) to count multicyclic de Bruijn sequences.
Let $G=G(1,q,k)$ and $H=G(m,q,k)$.
This has $\nu_e=m$ for all $e\in\Omega^k$.
Each vertex in $H$ has indegree and outdegree
 $mq$.
Thus,
\begin{equation}
|\Multicyc_G(\vec\nu)|
=
\frac{\prod_{x\in \Omega^{k-1}} (mq)!}{\prod_{\alpha\in\Omega^k} m!}
= \frac{(mq)!^{q^{k-1}}}{m!^{q^k}}
= W(m,q,k) \;.
\label{eq:MCDB_count_label_cycles}
\end{equation}
Each edge cycle $(e_1,\ldots,e_n)$ over $G$ yields a cyclic sequence
of length $n$ over $\Omega$ by taking the first (or last) letter of the $k$-mer labelling each edge. The edge cycle is aperiodic in $G$ iff the cyclic sequence of the cycle is aperiodic.
Thus, $|\MCDB(m,q,k)|$ is given by~(\ref{eq:MCDB_count_label_cycles}),
which agrees with Theorem~\ref{thm:MCDB_bijection}(b).

\appendix

\bibliographystyle{plain}
\bibliography{multidb-arxiv-refs}

\end{document}